\newcommand{\s}{\vspace{0.2cm}}
\newtheorem{theo}{Theorem}
\newtheorem{coro}{Corollary}
\theoremstyle{remark}
\newtheorem{rema}{\bf Remark}
\begin{document}

\title{Bipartite graphs and their dessins d'enfants}

\author{Ruben A. Hidalgo}
\address{Departamento de Matem\'atica y Estad\'{\i}stica, Universidad de La Frontera.  Temuco, Chile}
\email{ruben.hidalgo@ufrontera.cl}

\thanks{Partially supported by Project Fondecyt 1150003 and Project Anillo ACT1415 PIA-CONICYT}
\subjclass[2010]{Primary 14H57, secondary 05C10, 05C25, 11G32, 30F10}
\keywords{Dessins d'enfants, bipartite graphs, graph embeddings}
\maketitle


\begin{abstract}
Each finite and connected bipartite graph induces a finite collection of non-isomorphic dessins d'enfants, that is, $2$-cell embeddings of it into some  closed orientable surface. 
We describe an algorithm to compute all these dessins d'enfants, together their automorphims group, monodromy group and duality type.
\end{abstract}

\section{Introduction}
A dessin d'enfant, as introduced by Grothendieck in its {\it Esquisse d'un Programme} \cite{Gro}, 
is a $2$-cell embedding of a finite (necessarilly connected) bipartite graph into some closed orientable surface. Readers may consult, for instance \cite{GiGo,JS,LZ,Sch,Wolfart1,Wolfart2} and the references therein. As a consequence of Belyi's theorem \cite{Belyi}, there is a correspondence between dessins d'enfants and non-singular and irreducible projective algebraic curves defined over the field $\overline{\mathbb Q}$ of algebraic numbers. This provides a natural action of the absolute Galois group ${\rm Gal}(\overline{\mathbb Q}/{\mathbb Q})$ on dessins d'enfants, which is known to be faithful \cite{Gro,GiGo, GiGo1, Sch} (even faithful at the level of regular dessin d'enfants \cite{GoJa}). Grothendieck pointed out that such an action should provide information on the internal structure of ${\rm Gal}(\overline{\mathbb Q}/{\mathbb Q})$ codified in terms of simple combinatorial objects. Known Galois invariants of dessins d'enfants are their passports, monodromy groups and group of automorphisms (see Section \ref{Sec:invariantes}). Another Galois invariants have been produced in \cite{Ravi} (exetnding Belyi maps).  So far, there is not known a complete set of Galois invariants, at least to the actual author's knowledge. Recently, in \cite{GGH}, it has been discussed another invariant called the duality type of the dessin:
a dessin d'enfant is dualizable if its faces can be labelled by signs $+$ and $-$, so that adjacent faces have different label, equivalently, the dual graph is bipartite (this notion was originally discussed by Zapponi in \cite{GGH,Zapponi} for the case of clean dessins; he called them orientable ones). 

In recent years there has been an interest on 
dessins d'enfants in the field of supersymmetric gauge and conformal field theories \cite{ACD1,ACD2,Ha-He,He-R,JRR} to mention some of the applications in theoretical physics. In this way, it seems interesting searching for algorithms to provide examples of dessins d'enfants, up to isomorphisms, together some of their Galois invariants.

By the definition, to each dessin d'enfant there is associated a finite and connected bipartite graph. There examples of non-isomorphic dessins d'enfants with isomorphic  associated bipartite graphs (isomorphism as graphs but respecting colouring of vertices). In this paper, given a finite and connected bipartite graph ${\mathcal G}$, we provide an algorithm which permits to construct all  non-isomorphic dessins d'enfants, together their automorphims group, monodromy group and orientability type, whose underlying bipartite graph is isomorphic to ${\mathcal G}$. As an example, for the double-prism bipartite graph shown in Figure \ref{DoublePrism} (see Example \ref{Sec:doble-prisma}), our algorithm determines that
there are $5946$ non-isomorphic dessins d'enfants; $2$ of genus zero (bot are dualizable), $79$ of genus one (only $22$ of them being dualizable), $1849$ of genus two (only $121$ of them being dualizable) and $4016$ of genus three (only $33$ being dualizable). 
Two of these genus one non-isomorphic dessins d'enfants have the same passport $(4^6;2^{12};3^2,4^2,5^2)$ and isomorphic monodromy groups, one of them being chiral and the other being reflexive; so they are not in the same Galois orbit.

\subsection*{The algorithm}
Next, we proceed to describe the algorithm and the main procedure steps.

\underline{Input}:
\begin{enumerate}
\item[(I1)] A finite and connected bipartite graph ${\mathcal G}$ with $e \geq 1$ edges,    
$\alpha$ black vertices $v_{1},\ldots,v_{\alpha}$ and  $\beta$ white vertices  $w_{1},\ldots,w_{\beta}$. 
\item[(I2)] The group $G_{\mathcal G}$ of bipartite graph automorphisms of ${\mathcal G}$ (graph automorphisms preserving vertices of a fixed colour). 
\end{enumerate}

\underline{Output}:
\begin{enumerate}
\item[(O)] A maximal collection of non-isomorphic dessins d'enfants,  whose subjacent bipartite graph is isomorphic to ${\mathcal G}$, together their monodromy group, automorphisms group and duality type.
\end{enumerate}

\underline{Prodedure}:
\begin{enumerate}
\item[(P1)] Fix an enumeration of the $e$ edges of ${\mathcal G}$ with numbers in the set $\{1,\ldots,e\}$ without repeating. 

\item[(P2)] The above enumeration determines a natural injective homomorphism $\theta:G_{\mathcal G} \to {\mathfrak S}_{e}$. One may use the package ``GRAPE" in GAP \cite{GAP} in order to obtain the group of automorphisms of the bipartite graph (at least for clean bipartite graphs); this must be done for the associated edge-graph in order to obtain the action on the edges.

\item[(P3)] For each black vertex $v_{i}$ (respectively, white vertex $w_{j}$) we consider the collection ${\mathcal F}_{{\mathcal G},i}^{b}$ (respectively, ${\mathcal F}_{{\mathcal G},j}^{w}$) of all possible cycles $\sigma_{i}$ (respectively, $\tau_{j}$) of length equal to the degree of such vertex in ${\mathcal G}$ using the numbers at all the edges adjacents to such a vertex. Set 
${\mathcal F}_{\mathcal G}={\mathcal F}_{\mathcal G}^{b} \times {\mathcal F}_{\mathcal G}^{w}$, where
${\mathcal F}_{\mathcal G}^{b}$ is the collection of all the permutations $\sigma=\sigma_{1}\cdots\sigma_{\alpha} \in {\mathfrak S}_{e}$, where $\sigma_{i} \in {\mathcal F}_{{\mathcal G},i}^{b}$,  and 
${\mathcal F}_{\mathcal G}^{w}$ is the collection of all the permutations $\tau=\tau_{1}\cdots\tau_{\beta} \in {\mathfrak S}_{e}$, where $\tau_{j} \in {\mathcal F}_{{\mathcal G},j}^{w}$.

\item[(P4)] Because of the connectivity of ${\mathcal G}$, for each pair $(\sigma,\tau) \in {\mathcal F}_{\mathcal G}$, the group $\langle \sigma, \tau\rangle$ is a transitive subgroup of ${\mathfrak S}_{e}$, so it defines a dessin d'enfant ${\mathcal D}_{(\sigma,\tau)}$ whose associated bipartite  graph is ${\mathcal G}$. 

\item[(P5)] A natural action of $G_{\mathcal G}$ on the set ${\mathcal F}_{\mathcal G}$ is given by
$$G_{\mathcal G} \times {\mathcal F}_{\mathcal G} \to {\mathcal F}_{\mathcal G}: (\phi, (\sigma,\tau)) \mapsto\phi \cdot (\sigma,\tau):=\left(\theta(\phi)^{-1} \sigma \theta(\phi), \theta(\phi)^{-1} \tau \theta(\phi)\right).$$
The multiplication of permutations are from the left to the right as it is done in GAP \cite{GAP}.

\item[(P6)]  As a consequence of Theorem \ref{teomain} (see Section \ref{Sec:mainpart}) we obtain the following facts.
\begin{enumerate}
\item[(I)] If ${\mathcal D}$ is a dessin d'enfant whose underlying bipartite graph is isomorphic to ${\mathcal G}$, then it is isomorphic to ${\mathcal D}_{(\sigma,\tau)}$ for a suitable $(\sigma,\tau) \in {\mathcal F}_{\mathcal G}$. 

\item[(II)] Two pairs in ${\mathcal F}_{\mathcal G}$ define isomorphic dessins d'enfant if and only if they belong to the same $G_{\mathcal G}$-orbit. 

\item[(III)] The group of automorphism of the dessin d'enfant $D_{(\sigma,\tau)}$, where $(\sigma,\tau) \in {\mathcal F}_{\mathcal G}$ is naturally isomorphic to the $G_{\mathcal G}$-stabilizer of $(\sigma,\tau)$.

\item[(IV)]  $D_{(\sigma,\tau)}$ is dualizable if and only if 
there exists a homomorphism $\rho:M=\langle \sigma,\tau\rangle \to {\mathbb Z}_{2}$ so that $\rho(\sigma)=\rho(\tau)=-1$ and ${\rm Stab}_{M}(1)<\ker(\rho)$ \cite{GGH}.

\end{enumerate}

\end{enumerate}

\section{Preliminaries and notations}\label{Sec:dessins}

\subsection{Graphs and their automorphisms}\label{Sec:grafo}
Let us consider a finite  graph ${\mathcal G}=(V,E,\varepsilon)$, where $V$ and $E$ are the finite sets of its vertices and edges, together its 
incident map $\varepsilon:E \to (V \times V)/{\mathfrak S}_{2}$: if $\varepsilon(e)=(v_{1},v_{2})$, then $e$ and the vertices $v_{1}$ and $v_{2}$ are incident (if $v_{1}=v_{2}$, then $e$ is a loop). The degree of $v \in V$ is
$${\rm deg}_{\mathcal G}(v)=2|\varepsilon^{-1}(v,v)|+\sum_{w \in V, v \neq w} |\varepsilon^{-1}(v,w)|.$$

In this paper we will only consider those finite graphs which are connected that is, for every pair of vertices $v_{1}, v_{2} \in V$ there is (finite) collection of edges $e_{1},\ldots,e_{n} \in E$ so that $e_{1}$ and $v_{1}$ are incident, $e_{n}$ and $v_{2}$ are incident and $e_{i}$ with $e_{i+1}$ are both incident to a common vertex. 

An isomorphism of the graphs ${\mathcal G}_{1}=(V_{1},E_{1},\varepsilon_{1})$ and ${\mathcal G}_{2}=(V_{2},E_{2},\varepsilon_{2})$ is a pair $\phi=(\phi_{1},\phi_{2})$, where $\phi_{1}:V_{1} \to V_{2}$ and $\phi_{2}:E_{1} \to E_{2}$ are bijective functions respecting the incident maps, i.e., 
$$[(\phi_{1},\phi_{1})] \circ \varepsilon_{1}=\varepsilon_{2} \circ \phi_{2},$$
where $[(\phi_{1},\phi_{1})]$ is the induced map by $(\phi_{1},\phi_{1}):V_{1} \times V_{1} \to V_{2} \times V_{2}$. In the case ${\mathcal G}_{1}={\mathcal G}_{2}={\mathcal G}$, the isomorphism is a graph-automorphism of ${\mathcal G}$; we denote by $\widehat{G}_{\mathcal G}$ its group of graph automorphisms. 

If we enumerate the $e$ edges of the graph ${\mathcal G}$ with numbers in $\{1,\ldots,e\}$ (without repeating), then to each automorphism $T \in \widehat{G}_{\mathcal G}$ there is associated a permutation $\theta(T) \in {\mathfrak S}_{e}$ (the symmetric group); providing in this way a natural homomorphism $\theta:\widehat{G}_{\mathcal G} \to {\mathfrak S}_{e}$. Let us observe that for a non-trivial graph automorphism $T$ it might be that $\theta(T)$ is the identity. In \cite{Mulase} it was seen that such a pathology only happens for an {\it special graph}, i.t., a graph 
having exactly two vertices and without loops (see Figure \ref{Fig:Mulase}). So, if the graph is not special, then the
homomorphism $\theta$ is injective.

\begin{figure}[htbp]
\begin{center}
\includegraphics[width=2cm]{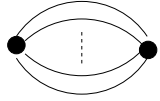}
\caption{{\bf Special graphs: two vertices and $n \geq 1$ edges}}
\label{Fig:Mulase}
\end{center}
\end{figure}

\subsection{Bipartite graphs}
 A bipartite graph is a graph together a colouring of its vertices using two colours, black and white, so that adjacent vertices have different colours. 
The {\it passport} of a bipartite graph ${\mathcal G}$ is the tuple $(a_{1},\ldots,a_{\alpha};b_{1},\ldots,b_{\beta})$,
where $$a_{1} \leq a_{2} \leq \cdots \leq a_{\alpha} \quad \mbox{are the degrees of the black vertices},$$
$$b_{1} \leq b_{2} \leq \cdots \leq b_{\beta} \quad \mbox{are the degrees of the white vertices}.$$

Let us note that, if $e$ is the number of edges of the bipartite graph, then 
$$e=a_{1}+\cdots+a_{\alpha}=b_{1}+\cdots+b_{\beta}.$$

If  $b_{1}=\cdots=b_{\beta}=2$, then the bipartite graph is called {\it clean}.

An element of $\widehat{G}_{\mathcal G}$ either (i) keeps invariant the vertices of a fixed color or (ii) it interchanges the black vertices with the white vertices. We will denote by $G_{\mathcal G}$ the subgroup of $\widehat{G}_{\mathcal G}$ of those automorphisms which sends black (respectively, white) vertices to black (respectively, white) vertices; its elements are called automorphisms of the bipartite graph ${\mathcal G}$.  In most of the cases $\widehat{G}_{\mathcal G}=G_{\mathcal G}$; otherwise $[\widehat{G}_{\mathcal G}:G_{\mathcal G}]=2$ (in this case, the elements in $\widehat{G}_{\mathcal G}-G_{\mathcal G}$ permutes the white vertices with the black ones). We have that the restriction $\theta: {G}_{\mathcal G} \to {\mathfrak S}_{e}$ is then an injective homomorphism.

\s
\begin{rema} If ${\mathcal G}$ is a finite and connected graph, thn we may consider the clean bipartite graph ${\mathcal G}^{clean}$ obtained by colouring all vertices of ${\mathcal G}$ in black and then taking a white vertex in the interior of each of its edges. We may observe that $\widehat{G}_{{\mathcal G}^{clean}}=G_{{\mathcal G}^{clean}}$ if at least one of the vertices of ${\mathcal G}$ has degree at least three. As every graph automorphism of ${\mathcal G}$ induces a bipartite graph automorphism of ${\mathcal G}^{clean}$, there is  an embedding $\chi:\widehat{G}_{\mathcal G} \hookrightarrow G_{{\mathcal G}^{clean}}$; which is surjective if and only if the graph ${\mathcal G}$ has no loops. In particular, if ${\mathcal G}$ has no loops and at least one of its vertices has degree at least three, then $G_{{\mathcal G}^{clean}}$ is naturally isomorphic to $\widehat{G}_{\mathcal G}$. In the case when ${\mathcal G}$ has loops, each loop $e$ provides an extra involution $\phi_{e} \in G_{{\mathcal G}^{clean}}$ that permutes both edges of ${\mathcal G}^{clean}$ contained in $e$ and acts as the identity on all other edges. The group generated by $\chi(\widehat{G}_{\mathcal G})$ and all the involutions $\phi_{e}$ (where $e$ runs over all lops of ${\mathcal G}$) generates the group $G_{{\mathcal G}^{clean}}$. 
\end{rema}

\subsection{Dessins d'enfants: their passports, monodromy groups and group of automorphisms}
Next, we will recall some definitions and facts about dessins d'enfants (the reader may look, for instance, at \cite{GiGo,JS,LZ,Sch,Wolfart1,Wolfart2}). 

\subsubsection{$2$-cell embeddings}
An embedding (or a drawing) of a graph ${\mathcal G}=(V,E,\varepsilon)$ on a closed orientable surface $X$, denoted this by the symbol $\iota:{\mathcal G} \hookrightarrow X$, is a pair of injective functions
$\iota_{1}:V \to X$ and $\iota_{2}:E \to X$, so that $\iota_{2}(e)$ is a an arc homeomorphic to the unit open interval, $\iota_{2}(e_{1}) \cap \iota_{2}(e_{2})=\emptyset$ for $e_{1} \neq e_{2}$, $\iota_{1}(v) \cap \iota_{2}(e)=\emptyset$ (for every $v \in V$ and every $e \in E$), if $v \in V$ is incident to $e$, then $\iota_{1}(v)$ belong to one extreme of $\iota_{2}(e)$ and every extreme of $\iota_{2}(e)$ belongs to $\iota_{1}(V)$. The 
embedding is called a {\it $2$-cell embedding} if each connected component of $X-(\iota_{1}(V) \cup \iota_{2}(E))$ is simply-connected, called the {\it faces} of the embedding. This last condition ensures that ${\mathcal G}$ must be connected.

\s
\begin{rema}
A $2$-cell embedding $\iota:{\mathcal G} \hookrightarrow X$ induces a natural $2$-cell embedding
$\iota:{\mathcal G}^{clean} \hookrightarrow X$. In this way, the study of $2$-cell embedding of finite and connected graphs on closed orientable surfaces is, in some way, equivalent to the study of $2$-cell embeddings of clean bipartite graphs.
\end{rema}

\subsubsection{Dessins d'enfants}
A {\it dessin d'enfant}, as defined by Grothendieck in \cite{Gro}, is a triple ${\mathcal D}=(X,{\mathcal G},\iota)$, where $X$ is a closed orientable surface, ${\mathcal G}$ is a finite bipartite graph (vertices are coloured in black and white) and $\iota:{\mathcal G} \hookrightarrow X$ is a $2$-cell embedding.  Each face of the $2$-cell embedding is called a {\it face} of the dessin. The genus of ${\mathcal D}$ is the genus of $X$.  A face of ${\mathcal D}$ is topologically a polygon of $2r$ sides, where $r \geq 1$; we say that that face has degree $r$. The {\it passport} of ${\mathcal D}$ is the tuple
$(a_{1},\ldots,a_{\alpha};b_{1},\ldots,b_{\beta};c_{1},\ldots,c_{\gamma})$,
where $$a_{1} \leq a_{2} \leq \cdots \leq a_{\alpha} \quad \mbox{are the degrees of the black vertices},$$
$$b_{1} \leq b_{2} \leq \cdots \leq b_{\beta} \quad \mbox{are the degrees of the white vertices},$$
$$c_{1} \leq c_{2} \leq \cdots \leq c_{\gamma} \quad \mbox{are the degrees of the faces}.$$

\s
\begin{rema}
Note that the tuple $(a_{1},\ldots,a_{\alpha};b_{1},\ldots,b_{\beta})$ is the passport of the underlying bipartite graph ${\mathcal G}$.
If $e$ is the number of edges of the dessin d'enfant, then 
$$e=a_{1}+\cdots+a_{\alpha}=b_{1}+\cdots+b_{\beta}=c_{1}+\cdots+c_{\gamma},$$
and, as a consequence of Euler's formula, the genus of $X$ is
$$g=1+\frac{1}{2} \left( e-\alpha-\beta-\gamma\right).$$
\end{rema}

\subsubsection{The monodromy group}
Let ${\mathcal D}=(X,{\mathcal G},\iota)$ be a dessin d'enfant and 
let us denote the black (respectively, white) vertices of ${\mathcal G}$ as $v_{1},\ldots,v_{\alpha}$ (respectively, $w_{1},\ldots,w_{\beta}$).  Let us label the edges of ${\mathcal G}$ with numbers in $\{1,\ldots,e\}$ without repeating. For each black (respectively, white) vertex we chose a cyclic permutation of the edges at the $\iota$-image of that vertex in counterclockwise order (here we are using the orientation of $X$). We then consider, in ${\mathfrak S}_{e}$, the permutation $\sigma$ (respectively, $\tau$) obtained as the product of all these cyclic permutations at black (respectively, white) vertices. The subgroup $\langle \sigma, \tau\rangle$ of ${\mathfrak S}_{e}$, which is a transitive subgroup by the connectivity of ${\mathcal G}$, is called the {\it monodromy group} of ${\mathcal D}$. By the construction, the number of disjoint cycles of $\sigma$ (respectively, $\tau$) is $\alpha$ (respectively, $\beta$) and the number $\gamma$ of faces is equal to the number of disjoint cycles of $\tau\sigma$. 
The integers $a_{1}, \ldots, a_{\alpha}$ are the lengths of the cycles of $\sigma$, the integers 
$b_{1}, \ldots, b_{\beta}$ are the lengths of the cycles of $\tau$ and the integers
$c_{1} \ldots, c_{\gamma}$ are the lengths of the cycles of $\tau\sigma$.

\s
\begin{rema}
If $\sigma, \tau \in {\mathfrak S}_{e}$ are so that $M=\langle \sigma,\tau\rangle$ is 
a transitive subgroup, then $M$ is the monodromy of some dessin d'enfant with $e$ edges. Such a dessin d'enfant is constructed so that 
the disjoint cycles of $\sigma$ correspond to the black vertices, the disjoint cycles of $\tau$ correspond to the white vertices and the disjoint cycles of $\tau\sigma$ corresponds to the faces.
\end{rema}

\subsubsection{Isomorphisms between dessins d'enfants}
Two dessins d'enfants ${\mathcal D}_{1}=(X,{\mathcal G}=(V,E,\varepsilon),\iota=(\iota_{1},\iota_{2}))$ and 
${\mathcal D}_{2}=(\widehat{X},\widehat{\mathcal G}=(\widehat{V},\widehat{E},\widehat{\varepsilon}),\widehat{\iota}=(\widehat{\iota}_{1}, \widehat{\iota}_{2}))$ are called {\it isomorphic} (respectively, {\it non-orientable-isomorphic}) if there is an orientation-preserving (respectively, orientation-reversing) homeomorphism $H:X \to \widehat{X}$ and an isomorphism of bipartite graphs $\phi=(\phi_{1},\phi_{2}):{\mathcal G}_{1} \to {\mathcal G}_{2}$ (i.e., $\phi_{1}$ sends black (respectively, white) vertices to black (respectively, white) vertices, so that $\widehat{\iota}_{1} \circ \phi_{1} = H \circ \iota_{1}$ and $\widehat{\iota}_{2} \circ \phi_{2} = H \circ \iota_{2}$.
We say that the pair $(H,\phi)$, or just $H$ if it is clear in the context, is an {\it isomorphism} (respectively, {\it non-orientable isomorphism}) of the above two dessins d'enfants. 
In terms of the monodromy groups, the isomorphism of the dessins d'enfants ${\mathcal D}_{1}$ and ${\mathcal D}_{2}$ (with the same number $e$ of edges) can be stated as follows. Let us enumerate the edges of each dessin and consider the corresponding monodromy groups $\langle \sigma_{1}, \tau_{1} \rangle$ and $\langle \sigma_{2}, \tau_{2} \rangle$. Then ${\mathcal D}_{1}$ and ${\mathcal D}_{2}$ are isomorphic if and only if there is a permutation $\eta \in {\mathfrak S}_{e}$ so that $\eta \sigma_{1} \eta^{-1}=\sigma_{2}$ and $\eta \tau_{1} \eta^{-1}=\tau_{2}$ (see, for instance, \cite{GiGo}). 

Two dessins are called {\it chirals} if they are non-orientable isomorphic but they are not isomorphic.

\s
\begin{rema}
Observe that the passport of (orientable or non-orientable) isomorphic dessins d'enfants is the same. There known examples of non-isomorphic dessins d'enfant with the same passport and of  non-isomorphic dessins d'enfants (with same number of edges $e$) with the same monodromy group (this happens since we may have $\langle \sigma_{1},\tau_{1}\rangle=\langle \sigma_{2},\tau_{2}\rangle$). 
\end{rema}

\subsubsection{Automorphisms of dessins d'enfants}
An {\it automorphism} of a dessin d'enfant ${\mathcal D}=(X,{\mathcal G},\iota)$ is given by any self-isomorphism $(H,\phi)$ of it (in this definition, $H$ may or not preserve the orientation of $X$).
The {\it group of automorphisms} of ${\mathcal D}$ is denoted by the symbol ${\rm Aut}({\mathcal D})$.

There is a subgroup (of index at most two) ${\rm Aut}^{+}({\mathcal D})$, called its {\it group of orientation-preserving automorphisms}, which corresponds to those self-isomorphisms $(H,\phi)$ where $H$ is orientation-preserving. In most of the cases we have that ${\rm Aut}^{+}({\mathcal D})={\rm Aut}({\mathcal D})$ (the dessin has no orientation-reversing automorphisms). A dessin d'enfant admitting an orientation-reversing automorphism is called {\it reflexive}. 

There is a natural homomorphism $\rho:{\rm Aut}({\mathcal D}) \to G_{\mathcal G}$ (which is injective if the graph cannot be embedded in a circle); its restriction 
$\rho:{\rm Aut}^{+}({\mathcal D}) \to G_{\mathcal G}$  is always injective.

\s
\begin{rema}
(1) A labelling of the $e$ edges of ${\mathcal G}$, as before, provides  (i) a monodromy group $M=\langle \sigma, \tau \rangle < {\mathfrak S}_{e}$ of ${\mathcal D}$ and (ii) an injective homomorphism
$\theta:G_{\mathcal G} \to {\mathfrak S}_{e}$. It can be seen that the isomorphic image $\theta(\rho({\rm Aut}^{+}({\mathcal D}))$ is 
the centralizer of $M$. (2) The group $\langle \sigma^{-1},\tau^{-1}\rangle$ is the monodromy group of some dessin d'enfant $\overline{{\mathcal D}}$; called the {\it conjugated dessin} of ${\mathcal D}$. 
The Riemann surface structure on $X$ defined by $\overline{{\mathcal D}}$ is the conjugated of that defined by ${\mathcal D}$. These two conjugated dessins are isomorphic if and only if ${\mathcal D}$ admits an anticonformal automorphism (i.e., if the dessin is reflexive); otherwise, ${\mathcal D}$ and $\overline{{\mathcal D}}$ is a chiral pair.
\end{rema}

\subsection{Galois actions on dessins d'enfants}\label{Sec:invariantes}
A {\it Belyi pair} is a pair $(S,{\mathfrak B})$, where $S$ is a closed Riemann surface and ${\mathfrak B}:S \to \widehat{\mathbb C}$ is a non-constant meromorphic map whose branch values are contained in the set $\{\infty,0,1\}$; in this case,  $S$ is a {\it Belyi curve} and that ${\mathfrak B}$ is a {\it Belyi map} for $S$. As consequence of Weil's descent theorem \cite{Weil}, every Belyi pair can be defined over the field $\overline{\mathbb Q}$ of algebraic numbers. On the other direction, Belyi's theorem \cite{Belyi} asserts that if $S$ is a closed Riemann surface which can be defined by an algebraic curve over $\overline{\mathbb Q}$, then $S$ is a Belyi curve (and its has a Belyi map defined over ${\mathbb Q}$). In particular, there is a natural action of the absolute Galois group ${\rm Gal}(\overline{\mathbb Q}/{\mathbb Q})$ on Belyi pairs which is known to be faithfull; in genus one this was observed by Grothendieck, in genus zero by Schneps  \cite{Sch}, in hyperelliptic Belyi pairs by Gonz\'alez-Diez and Girondo \cite{GiGo, GiGo1} and in the non-hyperelliptic case in \cite{HJ}. 
It is well known that there is a bijective correspondence between the equivalence classes of the following  objects  (see e.g \cite{GiGo, Gro, JW})
\begin{enumerate}
\item Dessins d'enfants  ${\mathcal D}=(X,{\mathcal G}, \iota)$ with $e$ edges;
\item Belyi pairs $(S, {\mathfrak B})$ of degree $e$; 
\item Subgroups $\Gamma$ of index $e$ of triangle groups $\Delta(l,m,n)=\langle x,y: x^{l}=y^{m}=(yx)^{n}=1\rangle$;
\item Pairs of  permutations  $\sigma, \tau  \in {\mathfrak S}_{e}$  generating transitive subgroups $M=\langle \sigma, \tau \rangle$ of ${\mathfrak S}_{e}$.
\end{enumerate}

The link between these four classes of objects is made as follows. Given a Belyi pair $(S, {\mathfrak B})$ one gets a dessin d'enfant by setting $X = S$, ${\mathcal G}= {\mathfrak B}^{-1}([0,1])$ and black (respectively, white) vertices are provided by ${\mathfrak B}^{-1}(0)$ (respectively, ${\mathfrak B}^{-1}(1)$.
A Fuchsian  group $\Gamma$ as in (3)  defines a Belyi function ${\mathfrak B}$ by simply considering the natural projection  ${\mathbb H}^{2}/ \Gamma \to  {\mathbb H}^{2}/ \Delta(l,m,n)$.
Finally, two permutations  $\sigma, \tau$,  of orders $l$ and $m$  as in (4),  with $n={\rm ord}(\tau \sigma)$,  give rise to a Fuchsian  group  $\Gamma$ as in (3) by considering the epimorphism  
$\omega: \Delta(l,m,n) \to {\mathfrak S}_{e}$  obtained   by sending the generators $x,y$   of  $\Delta(l,m,n)$    to $\sigma$ and $\tau$, respectively, and setting   $\Gamma =\omega^{-1}({\rm Stab}_{M}(1))$.
We use freely this correspondence and we will speak of the Belyi pair, the Fuchsian group and the permutation representation (or monodromy group) defining (or associated to) a dessin ${\mathcal D}$.

As a consequence of the previous correspondence, there is a faithful action of ${\rm Gal}(\overline{\mathbb Q}/{\mathbb Q})$ on the collection of dessins d'enfants.
If ${\mathcal D}$ is a dessin d 'enfant and $\sigma \in {\rm Gal}(\overline{\mathbb Q}/{\mathbb Q})$, then ${\mathcal D}^{\sigma}$ will denote the image of the dessin d'enfant ${\mathcal D}$ by the action of $\sigma$. The following properties of the dessin d'enfant are invariant under the action of the absolute Galois group \cite{GiGo}:
(1) the number of edges, (2) the passport, (3) the genus, (4) the monodromy group and (5) the group of automorphisms. 

\s
\begin{rema}
A consequence, the passport of the underlying bipartite graph is also kept invariant under the action of the absolute Galois group; but not necessarily the isomorphism type of the graph.
\end{rema}

\s

Unfortunately, the above Galois-invariants are not in general enough to decide if two dessins d'enfants belong to the same Galois orbit (examples can be found in \cite{GiGo}). Another Galois invariants have been produced in \cite{Ravi}.  

A dessin d'enfant ${\mathcal D}=(X,{\mathcal G},\iota)$ is called {\it dualizable} if we may paint its faces in two different colours so that adjacent faces have different colours; equivalently, the dual graph defines also a dessin d'enfant on $X$. In \cite{GGH} it has been shown that duality type of the dessin is another Galois invariant of a dessin d'enfant.

\s
\begin{theo}[\cite{GGH}]
Let ${\mathcal D}=(X,{\mathcal G},\iota)$ be a dessin d'enfant with monodromy group $M=\langle \sigma,\tau \rangle$. Then ${\mathcal D}$ is dualizable if and only if 
there exists a homomorphism $\rho:M \to {\mathbb Z}_{2}$ so that $\rho(\sigma)=\rho(\tau)=-1$ and ${\rm Stab}_{M}(1)<\ker(\rho)$.
\end{theo}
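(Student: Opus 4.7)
The plan is to translate dualizability into a $\mathbb{Z}_2$-valued condition on the edges and then exploit the bijection $\{1,\ldots,e\}\cong\operatorname{Stab}_M(1)\backslash M$ to pass between functions on the edge set and homomorphisms $M\to\mathbb{Z}_2=\{\pm1\}$.

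I first carry out the combinatorial reformulation. Reading off from the face permutation, i.e.\ tracing a face boundary while turning at each vertex so as to keep the face on the same side, one verifies that the two faces of $\mathcal{D}$ meeting along the edge labelled $i$ are precisely the cycles of $\tau\sigma$ containing $i$ and containing $\sigma(i)$ (equivalently, containing $\tau^{-1}(i)$, since $(\tau\sigma)(\tau^{-1}(i))=\sigma(i)$). Consequently, a $\pm1$-labelling of the faces with adjacent faces receiving opposite signs is the same data as a function $c:\{1,\ldots,e\}\to\{\pm1\}$ satisfying
\[
c(\sigma(i))=-c(i)\quad\text{and}\quad c(\tau(i))=-c(i)\qquad\text{for every }i;
\]
these two flip conditions jointly force $c\bigl((\tau\sigma)(i)\bigr)=c(\sigma(\tau(i)))=c(i)$, so constancy on the cycles of $\tau\sigma$ is automatic and $c$ descends to the face set.

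For the ``if'' direction, assume $\rho$ satisfies the hypotheses. Using the transitive right action of $M$ on $\{1,\ldots,e\}$, pick for each $i$ some $m_i\in M$ with $1^{m_i}=i$ and set $c(i):=\rho(m_i)$; the hypothesis $\operatorname{Stab}_M(1)<\ker\rho$ makes this well-defined, and $\rho(\sigma)=\rho(\tau)=-1$ then yields $c(i^\sigma)=\rho(m_i\sigma)=-c(i)$ and analogously for $\tau$, so $c$ furnishes a dualization of $\mathcal{D}$. Conversely, starting from a dualization and the associated $c$, I define $\rho:M\to\{\pm1\}$ by $\rho(m):=c(1^m)/c(1)$. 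Induction on the length of a word $w=x_1\cdots x_k$ in $\sigma^{\pm1},\tau^{\pm1}$ representing $m$, using the flip conditions, gives $c(j^m)=(-1)^kc(j)$ for every $j$; this base-point independence both shows that $(-1)^k$ depends only on $m$ and yields
\[
\rho(mn)=\frac{c(1^{mn})}{c(1^m)}\cdot\frac{c(1^m)}{c(1)}=\rho(n)\cdot\rho(m),
\]
so $\rho$ is a homomorphism. Evaluating on the generators gives $\rho(\sigma)=\rho(\tau)=-1$, while $m\in\operatorname{Stab}_M(1)$ forces $c(1^m)=c(1)$ and hence $\rho(m)=1$, establishing $\operatorname{Stab}_M(1)<\ker\rho$.

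The most delicate step is the initial combinatorial identification; once the two faces at each edge have been correctly recognized as the $\tau\sigma$-orbits of $i$ and of $\sigma(i)$, both implications of the theorem reduce to the standard dictionary between functions on the transitive $M$-set $\{1,\ldots,e\}$ and homomorphisms on $M$ whose kernel contains $\operatorname{Stab}_M(1)$.
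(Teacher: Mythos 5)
Your proof is correct, but note that the paper itself offers no proof of this statement: it is quoted from the reference [GGH] (listed as ``in preparation''), so there is no argument in the text to compare yours against; your write-up is evaluated on its own terms, and it holds up. The combinatorial reformulation --- that the two faces bordering the edge labelled $i$ are the $\tau\sigma$-cycles of $i$ and of $\sigma(i)$ (equivalently of $\tau^{-1}(i)$), so that a proper $2$-colouring of the faces is the same thing as a function $c:\{1,\ldots,e\}\to\{\pm 1\}$ with $c\circ\sigma=-c$ and $c\circ\tau=-c$ --- is the right one, and it is robust against the orientation convention: with the opposite convention one gets $\sigma^{-1}(i)$ in place of $\sigma(i)$, which yields the identical system of conditions on $c$. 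Your observation that the two flip conditions automatically force constancy of $c$ on the cycles of $\tau\sigma$ is what makes the ``if'' direction clean, and the ``only if'' direction is the standard dictionary between $\{\pm 1\}$-valued functions on the transitive $M$-set $\{1,\ldots,e\}$ that change sign under each generator and homomorphisms $M\to{\mathbb Z}_{2}$ killing ${\rm Stab}_{M}(1)$; the well-definedness, base-point independence and multiplicativity checks you give are exactly what is needed. The argument also correctly handles the degenerate case of an edge bordering a single face: there $c(\sigma(i))=-c(i)$ contradicts constancy on that cycle, matching the fact that a face adjacent to itself obstructs dualizability.
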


\s
\begin{rema}
If $M={\mathfrak S}_{e}$, for $e \geq 3$, as there is a unique surjective homomorphism $\rho:{\mathfrak S}_{e} \to {\mathbb Z}_{2}$ (its kernel being the alternating group ${\mathcal A}_{e}$), then the duality type of the dessin d'enfant is equivalent to have $\sigma,\tau \in {\mathfrak S}_{e}-{\mathcal A}_{e}$ and ${\rm Stab}_{M}(1)<{\mathcal A}_{e}$
\end{rema}

\subsubsection{Field of moduli and field of definition}
The {\it field of moduli} of a dessin d'enfant ${\mathcal D}$ is the fixed field of the subgroup of ${\rm Gal}(\overline{\mathbb Q}/{\mathbb Q})$ formed of those field automorphisms $\sigma$ so that ${\mathcal D}^{\sigma}$ is isomorphic to ${\mathcal D}$. A {\it field of definition} of a dessin d'enfant is a subfield $K$ of ${\mathbb C}$ so that there is a Belyi pair defined over $K$ defining the dessin. 
It is well known that every field of definition of a dessin contains its field of moduli and that the intersection of all of the fields of definitions is exactly the field of moduli \cite{Koizumi} (at this point observe that this last fact will be in general false if we only consider fields of definitions inside $\overline{\mathbb Q}$).

\section{Dessins d'enfants defined by a bipartite graph}\label{Sec:mainpart}
In this section we proceed to describe the theoretical bases for our algorithm (see Theorem \ref{teomain}) which permits to obtain, up to isomorphisms, those dessins d'enfants having the same underlying bipartite graph.

\subsection{The starting data}
Let ${\mathcal G}$ be a (connected and finite) bipartite graph, whose black (respectively, white) vertices are $v_{1},\ldots,v_{\alpha}$ (respectively, $w_{1},\ldots,w_{\beta}$) and let $G_{\mathcal G}$ be its bipartite graph automorphisms. Let us fix a labelling of the $e$ edges of ${\mathcal G}$ with numbers in $\{1,\ldots,e\}$ without repetitions. As seen in Section \ref{Sec:grafo}, this enumeration provides an injective homomorphism $\theta:G_{\mathcal G} \to {\mathfrak S}_{e}$.

\subsection{The collection ${\mathcal F}_{\mathcal G}$}
For each black vertex $v_{i}$ (respectively, white vertex $w_{j}$) we consider the collection ${\mathcal F}_{{\mathcal G},i}^{b}$ (respectively, ${\mathcal F}_{{\mathcal G},j}^{w}$) of all possible cycles $\sigma_{i}$ (respectively, $\tau_{j}$) of length equal to the degree $d_{v_{i}}$ (respectively, $d_{w_{j}}$) using all the labels at the edges at such a vertex. Clearly,  $\#({\mathcal F}_{{\mathcal G},i}^{b})=({\rm deg}_{\mathcal G}(v_{i})-1)!$ and $\#({\mathcal F}_{{\mathcal G},j}^{w})=({\rm deg}_{\mathcal G}(w_{j})-1)!$. Let ${\mathcal F}_{\mathcal G}^{b}$ (respectively, ${\mathcal F}_{\mathcal G}^{w}$) 
be the collection of all the permutations $\sigma=\sigma_{1}\cdots\sigma_{\alpha} \in {\mathfrak S}_{e}$, where $\sigma_{i} \in {\mathcal F}_{{\mathcal G},i}^{b}$ (respectively, $\tau=\tau_{1}\cdots\tau_{\beta} \in {\mathfrak S}_{e}$, where $\tau_{j} \in {\mathcal F}_{{\mathcal G},j}^{w}$). Set 
${\mathcal F}_{\mathcal G}={\mathcal F}_{\mathcal G}^{b} \times {\mathcal F}_{\mathcal G}^{w}$, whose cardinality is
$$(*) \quad N({\mathcal G})=\#{\mathcal F}_{\mathcal G}=\left(\prod_{j=1}^{\alpha} ({\rm deg}_{\mathcal G}(b_{j})-1)! \right)
\left( \prod_{j=1}^{\beta} ({\rm deg}_{\mathcal G}(w_{j})-1)! \right)$$

The connectivity of ${\mathcal G}$ asserts that, for each $(\sigma,\tau) \in {\mathcal F}_{\mathcal G}$, the subgroup $\langle \sigma, \tau\rangle$ of ${\mathfrak S}_{e}$ is transitive; so it is the monodromy group of a dessin d'enfant ${\mathcal D}_{(\sigma,\tau)}$, whose underlying bipartite graph is ${\mathcal G}$.  Let us observe that every dessin d'enfant whose underlying graph is isomorphic to ${\mathcal G}$ must be isomorphic to one of the dessins d'enfants defined by an element of ${\mathcal F}_{\mathcal G}$.

\s
\begin{rema}[Clean bipartite graphs]
If ${\mathcal G}^{clean}$ is the clean bipartite graph associated to a finite and connected graph ${\mathcal G}$, then 
in the definition of the collection ${\mathcal F}_{{\mathcal G}^{clean}}$ we may only consider the first coordinate $\sigma$'s as the second one $\tau$ is uniquely determined. 
\end{rema}

\subsection{Action of the group $G_{\mathcal G}$ on the collection ${\mathcal F}_{\mathcal G}$}
If $(\sigma,\tau) \in {\mathcal F}_{\mathcal G}$ and $\phi \in G_{\mathcal G}$, then  (as $\phi$ preserves the colours and incidences of edges) $\left(\theta(\phi)^{-1} \sigma \theta(\phi), \theta(\phi)^{-1} \tau \theta(\phi)\right) \in {\mathcal F}_{\mathcal G}$. This provides a natural action of $G_{\mathcal G}$ over the collection ${\mathcal F}_{\mathcal G}$.

\s
\begin{theo}\label{teomain}
\begin{enumerate}
\item Two pairs in ${\mathcal F}_{\mathcal G}$ define isomorphic dessins d'enfants if and only if they belong to the same $\theta(G_{\mathcal G})$-orbit. In particular, the cardinality of the quotient set ${\mathcal F}_{\mathcal G}/\theta(G_{\mathcal G})$ is equal to the number of different isomorphic dessins d'enfant having ${\mathcal G}$ as underlying bipartite graph. 

\item The $\theta(G_{\mathcal G})$-stabilizer of the dessin d'enfant ${\mathcal D}$ whose monodromy group is $\langle \sigma, \tau\rangle$, for  $(\sigma,\tau) \in {\mathcal F}_{\mathcal G}$, is equal to $\theta(\rho({\rm Aut}^{+}({\mathcal D})))$ and it is the group of automorphisms of it.
\end{enumerate}
\end{theo}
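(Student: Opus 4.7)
\medskip

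\noindent\textbf{Proof proposal.} My plan is to reduce everything to the known characterization of dessin isomorphism via simultaneous conjugation of monodromy pairs, and then to show that the conjugating permutations can be forced to lie in $\theta(G_{\mathcal G})$ precisely because the underlying bipartite graph ${\mathcal G}$ is fixed. First, unpacking the $G_{\mathcal G}$-action of (P5), two pairs $(\sigma_1,\tau_1),(\sigma_2,\tau_2)\in{\mathcal F}_{\mathcal G}$ lie in the same $\theta(G_{\mathcal G})$-orbit iff there exists $\eta=\theta(\phi)\in\theta(G_{\mathcal G})$ with $\eta^{-1}\sigma_1\eta=\sigma_2$ and $\eta^{-1}\tau_1\eta=\tau_2$. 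By the monodromy characterization of isomorphism recalled in the subsection on isomorphisms, this immediately gives the ``if'' direction of (1).

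The content of (1) is then the ``only if'' direction: any $\eta\in{\mathfrak S}_e$ that simultaneously conjugates $(\sigma_1,\tau_1)$ to $(\sigma_2,\tau_2)$ actually lies in $\theta(G_{\mathcal G})$. The key observation is that, because both pairs realize the \emph{same} bipartite graph ${\mathcal G}$ with the same fixed edge-labelling, the $i$-th cycle of $\sigma_1$ and the $i$-th cycle of $\sigma_2$ both consist of the labels of edges incident to the black vertex $v_i$ (and similarly for $\tau_k$ at $w_k$). Conjugation by $\eta$ permutes the cycles of $\sigma_1$, inducing a permutation $\phi_1^b$ of $\{v_1,\dots,v_\alpha\}$ such that $\eta$ carries the label set at $v_i$ bijectively to the label set at $\phi_1^b(v_i)$; likewise $\eta$ induces $\phi_1^w$ on the white vertices. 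Incidence is then automatic, since an edge labelled $k$ is incident to $v_i$ and $w_j$ iff $k$ appears in both the $i$-th cycle of $\sigma_1$ and the $j$-th cycle of $\tau_1$, iff $\eta(k)$ appears in the $\phi_1^b(v_i)$-th cycle of $\sigma_2$ and the $\phi_1^w(w_j)$-th cycle of $\tau_2$. Thus $(\phi_1^b\sqcup\phi_1^w,\eta)$ is a bipartite graph automorphism $\phi\in G_{\mathcal G}$ with $\theta(\phi)=\eta$, which proves (1) together with the first sentence of (2).

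For the remaining content of (2), the $\theta(G_{\mathcal G})$-stabilizer of $(\sigma,\tau)$ consists of those $\eta\in\theta(G_{\mathcal G})$ that commute with both $\sigma$ and $\tau$, equivalently that lie in the centralizer $C_{{\mathfrak S}_e}(M)$ of $M=\langle\sigma,\tau\rangle$. By the remark closing the subsection on automorphisms, $\theta(\rho({\rm Aut}^{+}({\mathcal D})))=C_{{\mathfrak S}_e}(M)$; the argument of the previous paragraph, applied with $(\sigma_1,\tau_1)=(\sigma_2,\tau_2)=(\sigma,\tau)$, shows $C_{{\mathfrak S}_e}(M)\subset\theta(G_{\mathcal G})$, so the two sets coincide with the stabilizer. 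Since $\rho$ and $\theta$ are both injective (the latter because ${\mathcal G}$, being bipartite and connected, cannot be the special graph of Figure \ref{Fig:Mulase}), the composition $\theta\circ\rho$ identifies ${\rm Aut}^{+}({\mathcal D})$ with this stabilizer as abstract groups.

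The main point requiring care will be the rigidity argument in the second paragraph: one must verify that the cycle-to-vertex correspondence is forced by the labelling (so that $\eta$ really does descend to a well-defined map on $V$ rather than merely on the cycle-index sets of $\sigma$ and $\tau$ separately), and that the resulting pair of vertex-bijections is compatible so as to form a single bipartite graph automorphism. Everything else is essentially a dictionary between the combinatorics of ${\mathcal F}_{\mathcal G}$ and the standard permutation model of dessins.
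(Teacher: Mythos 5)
Your argument is correct, but for the crucial ``only if'' direction of (1) it takes a genuinely different route from the paper. The paper argues topologically: an isomorphism of the two dessins is by definition a homeomorphism $H$ together with a bipartite graph automorphism $\phi$ compatible with the embeddings, and this $\phi$ directly supplies the conjugating element $\theta(\phi)$; conversely a conjugation by some $\theta(\phi)$ also conjugates $\tau_1\sigma_1$ to $\tau_2\sigma_2$ and hence can be realized by an orientation-preserving homeomorphism. You instead start from the purely algebraic characterization (isomorphism $\Leftrightarrow$ simultaneous conjugacy by \emph{some} $\eta\in{\mathfrak S}_e$) and then prove a rigidity statement: any $\eta$ carrying one pair of ${\mathcal F}_{\mathcal G}$ to another must permute the cycle supports, which are exactly the edge-label sets at the vertices, and therefore descends to a colour-preserving graph automorphism with $\theta(\phi)=\eta$. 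This is a stronger statement than the paper records (the full ${\mathfrak S}_e$-conjugacy class of a pair meets ${\mathcal F}_{\mathcal G}$ exactly in its $\theta(G_{\mathcal G})$-orbit), and it is precisely what makes your part (2) immediate, since it shows $C_{{\mathfrak S}_e}(M)\subset\theta(G_{\mathcal G})$ before invoking the identification of $\theta(\rho({\rm Aut}^{+}({\mathcal D})))$ with that centralizer; the paper simply says part (2) follows from part (1). Your route is arguably more self-contained, at the price of the bookkeeping you flag in your last paragraph (that cycle supports determine vertices uniquely, which holds since distinct vertices of a connected graph with $e\geq 1$ have distinct nonempty incident-edge sets). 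One small correction: a connected bipartite graph \emph{can} be the special graph of Figure \ref{Fig:Mulase} (one black and one white vertex joined by $n$ edges); the injectivity of $\theta$ on $G_{\mathcal G}$ holds nonetheless because the only automorphism acting trivially on edges there swaps the two vertices and hence does not preserve colours.
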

\begin{proof}
Let $(\sigma_{1},\tau_{1}), (\sigma_{2},\tau_{2}) \in {\mathcal F}_{\mathcal G}$ and let the corresponding dessins d'enfants be 
$(X_{1},{\mathcal G},\iota_{1})$ and $(X_{2},{\mathcal G},\iota_{2})$. If these are isomorphic dessins d'enfant, then there is an orientation-preserving  homeomorphism $F:X_{1} \to X_{2}$ and an automorphism $\phi=(\phi_{1},\phi_{2})$ of ${\mathcal G}$ (as a bipartite graph) so that $\iota_{2} \circ\phi = H \circ \iota_{1}$. Conversely, let us assume there is an automorphism $\phi$ of the bipartite graph ${\mathcal G}$ so that $\theta(\phi)$ conjugates $\sigma_{1}$ to $\sigma_{2}$ and $\tau_{1}$ to $\tau_{2}$. Then it also conjugates $\tau_{1}\sigma_{1}$ to $\tau_{2}\sigma_{2}$. This permits to construct a an orientation-preserving homeomorphism $H:X_{1} \to X_{2}$ so that $(H,\phi)$ is an automorphism of ${\mathcal G}$. This provides part (1). Part (2) is consequence of part (1).
\end{proof}

\s
\begin{rema}[Chirality/reflexivity] 
Let $(\sigma_{1},\tau_{1}), (\sigma_{2},\tau_{2}) \in {\mathcal F}_{\mathcal G}$ and let us assume that $(\sigma_{2}^{-1},\tau_{2}^{-1})$ belongs to the $G_{\mathcal G}$-orbit of $(\sigma_{1},\tau_{1})$. Then the following holds.
\begin{enumerate}
\item If $(\sigma_{1},\tau_{1})$ and $(\sigma_{2},\tau_{2})$ belong to different orbits (i.e. they are non-isomorphic dessins), then ${\mathcal D}_{(\sigma_{1},\tau_{1})}$ and ${\mathcal D}_{(\sigma_{2},\tau_{2})}$  form a chiral pair.
\item If $(\sigma_{1},\tau_{1})$ and $(\sigma_{2},\tau_{2})$ belong to the same orbit (i.e. they are isomorphic dessins), then ${\mathcal D}_{(\sigma_{1},\tau_{1})}$ is reflexive.
\end{enumerate}
\end{rema}

\s
\begin{coro}
If ${\mathcal G}$ is a bipartite graph with trivial group of automorphisms (as bipartite graph), then the number of different 
isomorphic dessins d'enfant admitting ${\mathcal G}$ as its bipartite graph is $N({\mathcal G})$.
\end{coro}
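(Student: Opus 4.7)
The plan is very short: this corollary is an immediate specialization of Theorem \ref{teomain}(1) to the case where the acting group is trivial. First I would invoke part (1) of Theorem \ref{teomain}, which asserts that the number of non-isomorphic dessins d'enfants whose underlying bipartite graph is $\mathcal{G}$ equals the cardinality of the orbit set $\mathcal{F}_{\mathcal{G}}/\theta(G_{\mathcal{G}})$.

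Next I would use the hypothesis. Since $G_{\mathcal{G}}$ is trivial, its image $\theta(G_{\mathcal{G}})$ under the homomorphism $\theta:G_{\mathcal{G}} \to \mathfrak{S}_e$ is also trivial. A trivial group acting on a set has every singleton as an orbit, so $\#\bigl(\mathcal{F}_{\mathcal{G}}/\theta(G_{\mathcal{G}})\bigr) = \#\mathcal{F}_{\mathcal{G}}$. Finally I would appeal to the counting formula $(*)$ established just above the statement, which gives $\#\mathcal{F}_{\mathcal{G}} = N(\mathcal{G})$, yielding the claim.

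There is essentially no obstacle: the whole content has already been packaged into Theorem \ref{teomain} and formula $(*)$. The only thing one might want to flag explicitly is that trivial $G_{\mathcal{G}}$ means two distinct pairs in $\mathcal{F}_{\mathcal{G}}$ automatically define non-isomorphic dessins, so the count is exact rather than an upper bound.
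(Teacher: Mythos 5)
Your proof is correct and is exactly the route the paper intends: the corollary is stated as an immediate consequence of Theorem \ref{teomain}(1) combined with the cardinality formula $(*)$ for $\#{\mathcal F}_{\mathcal G}$, and when $G_{\mathcal G}$ is trivial the orbits are singletons, so the orbit count equals $N({\mathcal G})$. Nothing is missing.
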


\s
\begin{rema}[On Wilson's operations]
In \cite{Wilson} there were defined the Wilson's operations on dessins d'enfants. These operations are defined as follows. Let fix $r,s$ be positive  integers so that $r$ (respectively, $s$) is co-prime to all degrees of black (respectively, white) vertices of the bipartite graph ${\mathcal G}$. The Wilson's operation $H(r,s):{\mathcal F}_{\mathcal G} \to {\mathcal F}_{\mathcal G}$ is defined by sending 
$(\sigma,\tau) \in {\mathcal F}_{\mathcal G}$ to the new pair $(\sigma_{r,s},\tau_{r,s}) \in {\mathcal F}_{\mathcal G}$ where $\sigma_{r,s}=\sigma^{r}$ and $\tau_{r,s}=\tau^{s}$.  A graph theoretic characterization of certain quasiplatonic curves defined over cyclotomic fields, based on Wilson's operations on maps, is developed in \cite{J-S-W}
\end{rema}

\subsection{A remark on the graph genus}
Let ${\mathcal G}$ be a finite and connected graph. The graph genus $\mu({\mathcal G})$ of ${\mathcal G}$ is the minimal genus of 
a closed orientable surface on which there is an embedding of ${\mathcal G}$. In \cite{Youngs} it has been seen that such a minimal genus embedding is in fact a $2$-cell embedding with a maximal number of faces. The determination of $\mu({\mathcal G})$ seems to be a difficult task, but in the same paper an algorithm to determine it was obtained (claimed that such an algorithm is lengthy). It is known that the problem of finding the graph genus is NP-hard and the problem of determining whether an n-vertex graph has genus $g$ is NP-complete \cite{Carsten}. For some types of graphs (vertex transitive ones) some information is known; for instance, $\mu(C_{n})=0$, $\mu(K_{n})=\lceil(n-3)(n-4)/12\rceil$ (see, \cite{R-Y}), $\mu(K_{n,n})=\lceil(n-2)^{2}/4\rceil$ (see, \cite{Ringel}).
Our algorithm can be used to compute $\mu({\mathcal G})$ as follows. Assume the number of edges of the graph is $e$ and the number of its vertices is $\alpha$.
We consider its associated clean bipartite graph ${\mathcal G}^{clean}$, an enumeration of its $2e$ edges and the corresponding collection ${\mathcal F}_{{\mathcal G}^{clean}}$. As there is only one possible permutation $\tau$ (this being a product of transpositions), we may just consider only the permutations $\sigma$. Now, for each $\sigma$, we consider the product permutation $\tau\sigma$ and we let $\gamma$ be the number of its disjoint cycles. If $\gamma^{max}$ is the maximal possible value of $\gamma$, then the minimal genus of ${\mathcal G}$ is
$$\mu({\mathcal G})=1+e-(\alpha+\gamma^{max})/2.$$
Similarly, if we let $\gamma^{min}$ be the minimal value for $\gamma$, then the maximal $2$-cell embedding genus of ${\mathcal G}$ is 
$$\nu({\mathcal G})=1+e-(\alpha+\gamma^{min})/2.$$

\section{Examples: some classical bipartite graphs}\label{Sec:ejemplos}
In this section, we consider some well known bipartite graphs and we use our algorithm to compute the number of corresponding non-isomorphic dessins d'enfants together their monodromy group,  automorphism group and  duality type. 

\subsection{Example 1}
Let $A_4$ be the clean bipartite graph obtained from the one skeleton of the tetrahedra (its vertices being the black vertices and as white vertices we use a middle point of each side; see Figure \ref{Fig2}). In this case $e=12$, $\alpha=4$ and $\beta=6$. In \cite{GiGo1} two non-isomorphic dessins d'enfants (one of genus zero and the other of genus one) admitting $A_4$ as bipartite graph were provided. 
Our algorithm permits to see that there are exactly three non-isomorphic dessins d'enfants with such property; the missing one has also genus one. In this case, $G_{A_{4}}\cong {\mathfrak S}_{4}$ and $\theta(G_{A_{4}})=\langle \eta_{1}, \eta_{2}, \eta_{3} \rangle$,
where $$\eta_{1}=(1,2,3)(4,5,6)(7,8,9)(10,11,12),\; \eta_{2}=(1,4)(8,11)(5,9)(2,12)(3,7)(6,10),$$
$$\eta_{3}=(2,8,9,4)(5,11,12,1)(3,10,6,7).$$

\begin{figure}[htbp]
\begin{center}
\includegraphics[width=3cm]{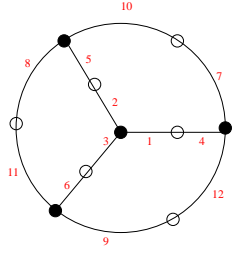}
\caption{{\bf Bipartite graph with passport $(3^4;2^6)$}}
\label{Fig2}
\end{center}
\end{figure}

The set ${\mathcal F}_{A_{4}}$ has $16$ elements and 
${\mathcal F}_{A_{4}}/\theta(G_{A_{4}})$ has three elements, these are represented by the pairs
$(\sigma_{1},\tau), (\sigma_{2},\tau), (\sigma_{3},\tau)$,
where
$$\tau=(1,4)(2,5)(3,6)(7,10)(8,11)(9,12),\;
\sigma_{1}=(1,2,3)(4,7,12)(5,8,10)(6,9,11),$$
$$\sigma_{2}=(1,2,3)(4,7,12)(5,8,10)(6,11,9),\;
\sigma_{3}=(1,2,3)(4,12,7)(5,10,8)(6,11,9),$$
that is, there are exactly three non-isomorphic dessins d'enfant with $A_{4}$ as bipartite graph. The $G_{A_4}$-orbit of $(\sigma_1,\tau)$ has length $8$, the one of $(\sigma_2,\tau)$ has length $6$ and the one of $(\sigma_{3},\tau)$ has length $2$.

The dessin d'enfant defined by the pair $(\sigma_{1},\tau)$ has genus $g=1$, its monodromy group is isomorphic to 
$(({\mathbb Z}_{3} \times ({\mathbb Z}_{3}^{2} \rtimes {\mathbb Z}_{2}))\rtimes {\mathbb Z}_{2})\rtimes {\mathbb Z}_{3}$ and its group of automorphisms is isomorphic to ${\mathbb Z}_{3}$ (see the left in Figure \ref{FigD1}).
The dessin d'enfant defined by the pair $(\sigma_{2},\tau)$ has genus $g=1$, its monodromy group is isomorphic to 
$({\mathbb Z}_{4}^{2} \rtimes {\mathbb Z}_{3})\rtimes {\mathbb Z}_{2}$ and its group of automorphisms is isomorphic to $D_{4}$ (the dihedral group of order $8$)  (see the right in Figure \ref{FigD1}).
The dessin d'enfant defined by the pair $(\sigma_{3},\tau)$ has genus $g=0$, is regular and its monodromy group (isomorphic to the group of automorphisms) is isomorphic to ${\mathcal A}_{4}$.
All these three dessins are reflexive ones and are not dualizable (as there are vertices of odd degree).

\begin{figure}[htbp]
\begin{center}
\includegraphics[width=3cm]{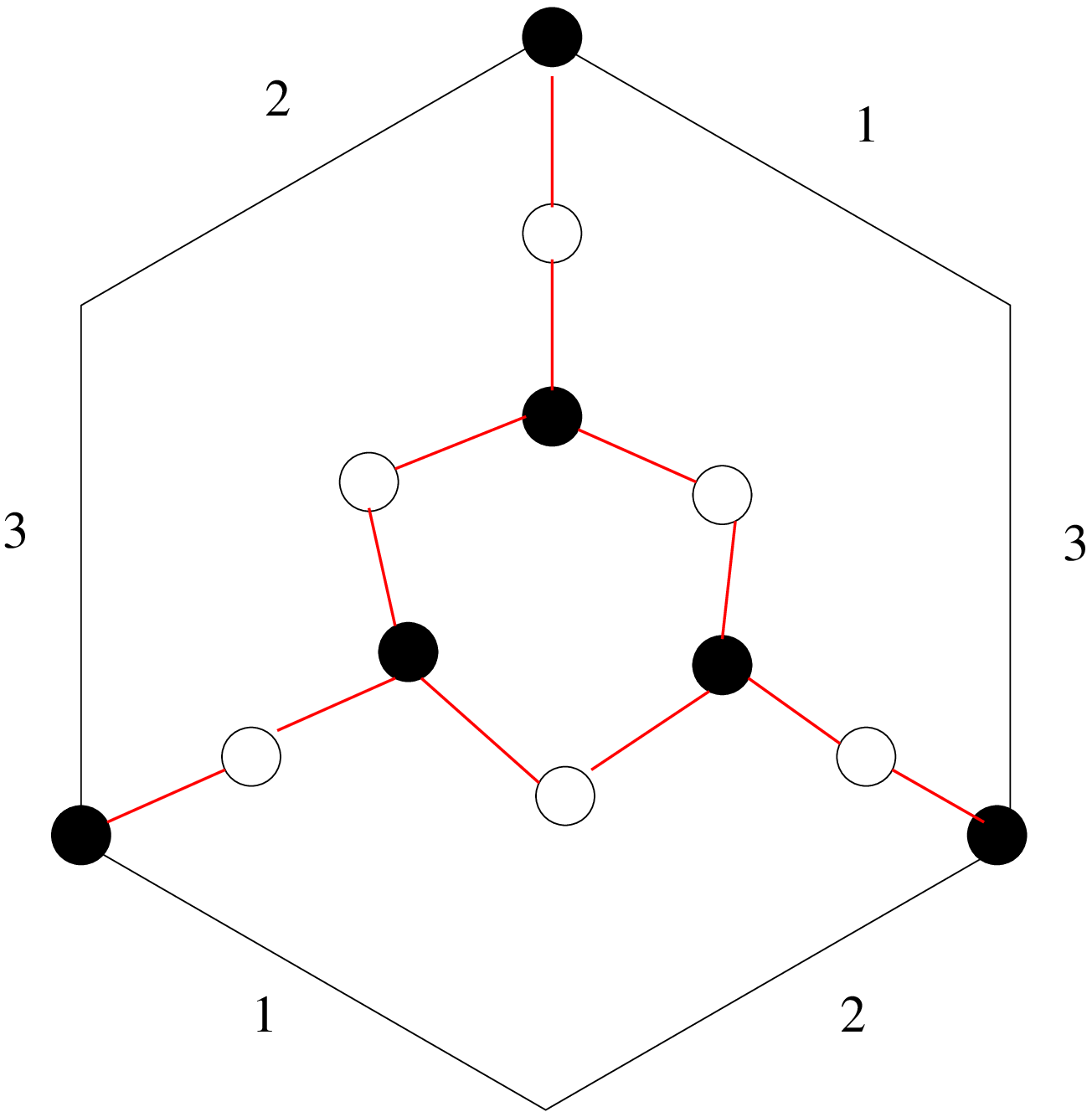}
\includegraphics[width=4cm]{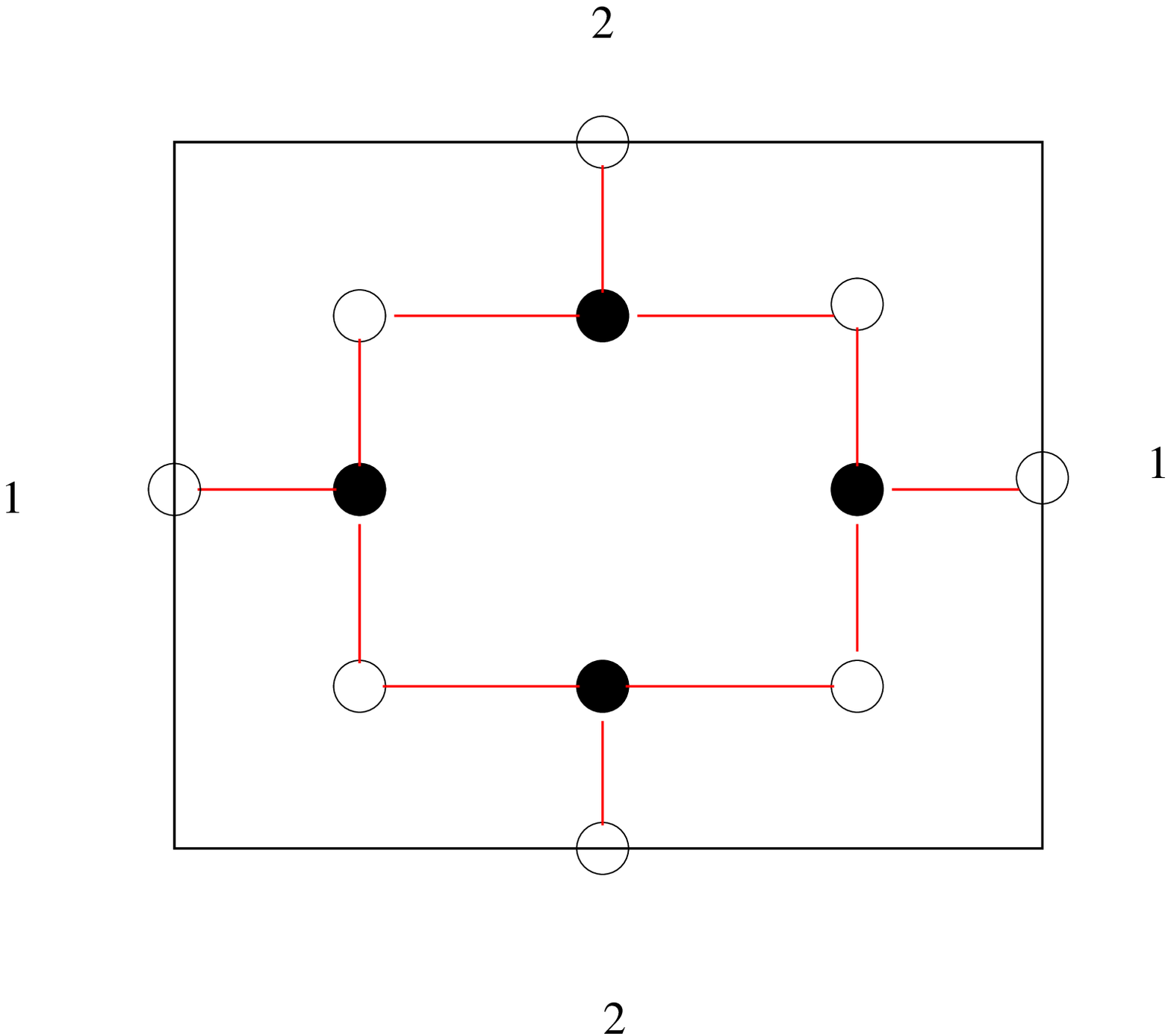}
\caption{{\bf Dessins d'enfants of genus $g=1$ with $A_4$ as bipartite graph}}
\label{FigD1}
\end{center}
\end{figure}

\subsection{Example 2}\label{ejemplo2}
Let $K_{3,3}^{clean}$ be the clean bipartite graph associated to $K_{3,3}$ (its black vertices are the vertices of $K_{3,3}$ and the white vertices are given at middle points of all of the edges), so $e=18$, $\alpha=6$ and $\beta=9$. In this case, $G_{K_{3,3}^{clean}}\cong {\mathfrak S}_{3}^{2} \rtimes {\mathbb Z}_{2}$ and $\theta(G_{A_{4}})=\langle \eta_{1}, \eta_{2}, \eta_{3} \rangle$,
where $$\eta_{1}=(1,4,7)(2,5,8)(3,6,9)(12,11,10)(15,14,13)(18,17,16),$$
$$\eta_{2}=(1,12)(2,11)(3,10)(4,15)(5,14)(6,13)(7,18)(9,16)(8,17),$$
$$\eta_{3}=(1,4)(2,5)(3,6)(12,11)(15,14)(18,17).$$

The set ${\mathcal F}_{K_{3,3}^{clean}}$ has $64$ elements and 
${\mathcal F}_{K_{3,3}^{clean}}/\theta(G_{K_{3,3}^{clean}})$ has three elements, these are represented by the pairs
$(\sigma_{1},\tau), (\sigma_{2},\tau), (\sigma_{3},\tau)$,
where
$$\tau=(1,12)(2,15)(3,18)(4,11)(5,14)(6,17)(9,16)(7,10)(8,13),$$
$$\sigma_{1}=(1,2,3)(4,5,6)(7,8,9)(10,11,12)(13,14,15)(16,17,18),$$ 
$$\sigma_{2}=(1,2,3)(4,5,6)(7,8,9)(10,11,12)(13,14,15)(16,18,17),$$
$$\sigma_{3}=(1,2,3)(4,5,6)(7,9,8)(10,11,12)(13,14,15)(16,18,17),$$
so, there are exactly three non-isomorphic dessins d'enfant with $K_{3,3}^{clean}$ as bipartite graph (i.e., there exactly three topologically types of embedding the graph $K_{3,3}$ in an orientable closed surface as a map). The $G_{K_{3,3}^{clean}}$-orbit of $(\sigma_1,\tau)$ has length $4$, the one of $(\sigma_2,\tau)$ has length $24$ and the one of $(\sigma_{3},\tau)$ has length $36$.
The dessin d'enfant defined by the pair $(\sigma_{1},\tau)$ has genus $g=1$, is regular and its monodromy group (isomorphic to 
the group of automorphisms) is isomorphic to ${\mathbb Z}_{3} \times {\mathfrak S}_{3}$. This regular dessin d'enfant is defined over the Fermat curve $x^3+y^3+z^3=0$; the factor ${\mathbb Z}_{3}$ is generated by the automorphism
$a[x:y:z]:=[x:y:\omega_{3}z]$ and the factor $ {\mathfrak S}_{3}$ is generated by the involution $b[x:y:z:]=[y:x:z]$ and the order three automorphism
$c[x:y:z:]=[\omega_{3}x:\omega_{3}^{2}y:z]$, where $\omega_{3}=e^{2 \pi i/3}$.
The dessin d'enfant defined by the pair $(\sigma_{2},\tau)$ has genus $g=2$, its monodromy group is isomorphic to 
$(({\mathbb Z}_{3} \times ({\mathbb Z}_{3}^{2} \rtimes {\mathbb Z}_{3})\rtimes {\mathbb Z}_{3})\rtimes {\mathbb Z}_{2}$ and its group of automorphisms is isomorphic to ${\mathbb Z}_{3}$. This corresponds to the genus two Riemann 
surface defined by $y^{3}=(x-1)(x^2+x+1)^{2}$.
The dessin d'enfant defined by the pair $(\sigma_{3},\tau)$ has genus $g=1$, its monodromy group is isomorphic to 
${\mathfrak S}_{9}$ and its group of automorphisms is isomorphic to ${\mathbb Z}_{2}$.
All the above dessins are reflexive and are not dualizable (as there are vertices of odd degree).

\subsection{Example 3: Frucht's graph}
The first example of a finite and connected graph $F$ with trivial group of automorphisms was provided by R. Frucht in \cite{Frucht} (see Figure \ref{FigFrucht1}). The associated clean bipartite graph $F^{clean}$ is shown in Figure \ref{FigFrucht2}.  In this case, $e=36$, $\alpha=12$ and $\beta=18$, the set ${\mathcal F}_{F^{clean}}$ has cardinality $2^{12}$ and its elements represent the non-isomorphic dessins d'enfants admitting the bipartite graph  $F^{clean}$. The elements of ${\mathcal F}_{F^{clean}}$ are given by the pairs $(\sigma_j, \tau)$, where
$$\tau=(1,20)(2,4)(5,7)(8,10)(11,13)(14,16)(17,19)(3,22)(6,23)\cdot$$ $$\cdot(9,29)(12,32)(15,33)(18,34)(21,35)(24,25)(26,28)(27,36)(30,31)$$
and the $2^{12}$ $\sigma_j's$ are of the form
$$(1,2,3)^{\pm 1} (4,5,6)^{\pm 1} (7,8,9)^{\pm 1} (10,11,12)^{\pm 1} (13,14,15)^{\pm 1} (16,17,18)^{\pm 1} (19,20,21)^{\pm 1} \cdot$$
$$\cdot (22,23,24)^{\pm 1} (25,26,27)^{\pm 1} (28,29,30)^{\pm 1} (31,32,33)^{\pm 1} (34,35,36)^{\pm 1}.$$

\begin{figure}[htbp]
\begin{center}
\includegraphics[width=3cm]{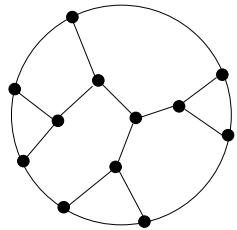}
\caption{{\bf Frucht's graph $F$}}
\label{FigFrucht1}
\end{center}
\end{figure}

\begin{figure}[htbp]
\begin{center}
\includegraphics[width=4cm]{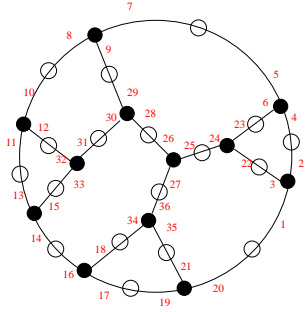}
\caption{{\bf The clean bipartite graph $F^{clean}$ associated to Frucht's graph}}
\label{FigFrucht2}
\end{center}
\end{figure}

The genus formula $g=1+(6-\gamma)/2$, where $\gamma$ denotes the number of faces of the dessin d'enfant, ensures that the possible genus are $g \in \{0,1,2,3\}$.
It is easy to see that $g=0$ is possible (Figure \ref{FigFrucht2} shows a dessin d'enfant of genus zero); this is provided with
$$\sigma_{0}=(1,2,3)(4,5,6)(7,8,9)(10,11,12)(13,14,15)(16,17,18)(19,20,21)\cdot$$
$$\cdot(22,23,24)(25,26,27)(28,29,30)(31,32,33)(34,35,36)$$ and the eight faces correspond to cycles of 
$$\tau\sigma_{0}=(1,21,36,25,22)(2,5,8,11,14,17,20)(3,23,4)(6,24,26,29,7)\cdot$$
$$\cdot(9,30,32,10)(12,33,13)(15,31,28,27,34,16)(18,35,19).$$

A dessin of genus one is provided by $$\sigma_{1}=(1,2,3)(4,5,6)(7,8,9)(10,11,12)(13,14,15)(16,17,18)(19,20,21)(22,23,24)\cdot$$ $$\cdot(25,26,27)(28,29,30)(31,32,33)(34,36,35)$$  in which case we have six faces and
$$\tau\sigma_{1}=(1,21,34,16,15,31,28,27,35,19,18,36,25,22)(2,5,8,11,14,17,20)\cdot$$
$$\cdot(3,23,4)(6,24,26,29,7)(9,30,32,10)(12,33,13).$$

A dessin of genus two is provided by $$\sigma_{2}=(1,2,3)(4,5,6)(7,8,9)(10,11,12)(13,14,15)(16,17,18)(19,20,21)(22,23,24)\cdot$$
$$\cdot(25,26,27)(28,29,30)(31,33,32)(34,36,35)$$  in which case we have four faces and
$$\tau\sigma_{2}=(1,21,34,16,15,32,10,9,30,33,13,12,31,28,27,35,19,18,36,25,22)\cdot$$
$$\cdot(2,5,8,11,14,17,20)(3,23,4)(6,24,26,29,7).$$

A dessin of genus three is provided by $$\sigma_{3}=(1,2,3)(4,5,6)(7,8,9)(10,11,12)(13,14,15)(16,17,18)(19,21,20)(22,24,23)\cdot$$
$$\cdot (25,26,27)(28,29,30)(31,33,32)(34,35,36)$$ in which case we have two faces and
$$\tau\sigma_{3}=(1,19,18,35,20,2,5,8,11,14,17,21,36,25,23,4,3,24,26,29,7,6,22)\cdot$$
$$\cdot (9,30,33,13,12,31,28,27,34,16,15,32,10).$$

All the above dessins are reflexive and are not dualizable (as there are vertices of odd degree).

\subsection{Example 4: The graph $K_{5}$}
Let $K_{5}^{clean}$ be the clean bipartite graph associated to the complete graph $K_{5}$ (see Figure \ref{FigK5}). With the given enumeration of the edges we have 
$$\tau=(1,8)(5,12)(9,16)(13,20)(4,17)(2,11)(7,18)(10,19)(3,14)(6,15)$$
and there are $7776$ choices for $\sigma$. In this case, $G_{K_{5}^{clean}}\cong {\mathfrak S}_{5}$ and $\theta(G_{K_{5}^{clean}})=\langle \eta_{1}, \eta_{2} \rangle$,
where $$\eta_{1}=(1,8)(2,5)(11,12)(3,6)(14,15)(4,7)(17,18),$$
$$\eta_{2}=(1,5,9,13,17)(8,12,16,20,4)(2,6,10,14,18)(11,15,19,3,7).$$

\begin{figure}[htbp]
\begin{center}
\includegraphics[width=5cm]{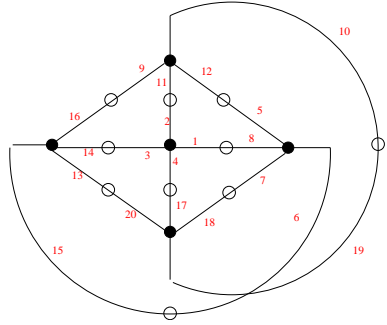}
\caption{{\bf The clean bipartite graph $K_{5}^{clean}$}}
\label{FigK5}
\end{center}
\end{figure}

The set ${\mathcal F}_{K_{5}^{clean}}/\theta(G_{K_{5}^{clean}})$ has $78$ elements, that is, there are exactly $78$ non-isomorphic dessins d'enfant whose bipartite graph is $K_{5}^{clean}$. 
Between these $78$ dessins d'enfants, there are exactly nine of genus $g=1$; these are given by the following choices for $\sigma$:
$$\sigma_{1}=(1,2,3,4)(5,6,7,8)(9,11,12,10)(13,14,16,15)(17,20,18,19)$$
$$\sigma_{2}=(1,2,3,4)(5,6,7,8)(9,11,12,10)(13,15,14,16)(17,18,20,19)$$
$$\sigma_{3}=(1,2,3,4)(5,6,7,8)(9,11,10,12)(13,14,16,15)(17,20,19,18)$$
$$\sigma_{4}=(1,2,3,4)(5,6,7,8)(9,11,10,12)(13,15,14,16)(17,19,18,20)$$
$$\sigma_{5}=(1,2,3,4)(5,6,8,7)(9,11,12,10)(13,14,15,16)(17,20,19,18)$$
$$\sigma_{6}=(1,2,3,4)(5,6,8,7)(9,11,12,10)(13,14,16,15)(17,20,19,18)$$
$$\sigma_{7}=(1,2,3,4)(5,6,8,7)(9,11,12,10)(13,15,14,16)(17,20,19,18)$$
$$\sigma_{8}=(1,2,3,4)(5,7,8,6)(9,12,11,10)(13,16,14,15)(17,19,18,20)$$
$$\sigma_{9}=(1,2,3,4)(5,7,6,8)(9,12,10,11)(13,16,15,14)(17,18,20,19)$$

Let us denote by $D_{j}$ the dessin d'enfant defined by $\sigma_{j}$, for $j=1,\ldots,9$.
The dessins d'enfant $D_{j}$, for $j=1,\ldots,7$, are non-uniform and those defined by $\sigma_{8}$ and $\sigma_{9}$ are regular. 
The two regular ones have passport $(4^5;2^{10};4^5)$ and they form a chiral pair. The automorphism group is isomorphic to ${\mathbb Z}_{5} \rtimes {\mathbb Z}_{4}$ and they are defined over the same elliptic curve $y^{2}=x^{4}-1$. 
The dessin $D_{5}$ is the only one whose monodromy group is isomorphic to $({\mathbb Z}_{4}^{4} \rtimes {\mathcal A}_{5}) \rtimes {\mathbb Z}_{4}$ (whose group of automorphisms is ${\mathbb Z}_{4}$); this again is over the elliptic curve $y^{2}=x^{4}-1$. This dessin is reflexive and its passport is $(4^5;2^{10};3^4,8)$.
The genus one dessin d'enfant $D_{2}$ is the only with monodromy group of order $26.336.378.880.000$ and it has trivial group of automorphisms. This dessin is reflexive and its passport is $(4^5;2^{10};3^3,4,7)$.
The other five dessins have monodromy group of order $1.857.945.600$ and group of automorphisms isomorphic to ${\mathbb Z}_{2}$. The dessins $D_{1}$ and $D_{3}$ (respectively, by $D_{6}$ and by $D_{7}$) form a chiral pair. The passport of $D_{1}$ and $D_{3}$ is $(4^5;2^{10};3^2,4^2,6)$ and the passport of $D_{6}$ and $D_{7}$ is $(4^5;2^{10};3^4,8)$.  The dessin $D_{4}$ is reflexive and its passport is $(4^5;2^{10};3^2,4,5^2)$. All these dessins are not dualizable (as there are vertices of odd degree).

\subsection{Example 5: The double-prism graph}\label{Sec:doble-prisma}
Let us now consider the bipartite graph ${\mathcal G}$ as shown in Figure \ref{DoublePrism} with the given enumeration of the edges. With the given enumeration,
$$\tau=(1,5)(2,6)(3,7)(4,8)(9,13)(10,14)(11,15)(12,16)(17,21)(18,22)(19,23)(20,24)$$
and there are $6^6$ choices for $\sigma$. The dessins d'enfants admitting the above bipartite graph are of genus $g \in \{0,1,2,3\}$ and there are exactly $5946$ non-isomorphic ones: two non-isomorphic dessins of genus zero (both are dualizable), $79$ of genus one ($22$ of them are dualizable), $1849$ of genus two ($121$ of them are dualizable) and $4016$ of genus three (only $33$ being dualizable). 

Of these genus one dessins d'enfants, there are exactly $13$ with passport $(4^6;2^{12};3^2,4^2,5^2)$, all of them have monodromy group of order $980995276800$, exactly $3$ of them which are reflexive and the others are chirals pairs. Two of these dessins, say ${\mathcal D}_{1}$ and ${\mathcal D}_{2}$ are provided by the permutations
 $$\sigma_{1}=(1,2,3,4)(5,13,17,24)(6,18,21,14)(7,15,22,19)(8,23,20,16)(9,12,11,10)$$
$$\sigma_{2}=(1,2,4,3)(5,13,17,24)(6,14,18,21)(7,19,22,15)(8,16,20,23)
(9,11,10,12)$$
$$\tau_{1}=\tau_{2}=(1,5)(2,6)(3,7)(4,8)(9,13)(10,14)(11,15)(12,16)(17,21)(18,22)
(19,23)(20,24).$$

\begin{figure}[htbp]
\begin{center}
\includegraphics[width=4cm]{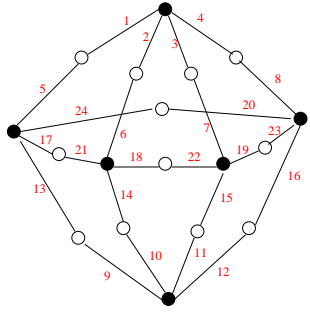}
\caption{{\bf The double-prism graph}}
\label{DoublePrism}
\end{center}
\end{figure}

Using GAP \cite{GAP} one can check that both of them have isomorphic monodromy group; but, the dessin d'enfant ${\mathcal D}_{1}$ is chiral (with group of automorphisms isomorphic to ${\mathbb Z}_{2}$) and ${\mathcal D}_{2}$ is reflexive (with trivial group of automorphisms); so they cannot be in the same Galois orbit.

\subsection{Example 6: dessins d'enfants defined by bipartite graphs with passport $(3^3;3^3)$}
There are, up to isomorphisms, exactly three bipartite graphs with passport $(3^3;3^3)$ ($e=9$, $\alpha=\beta=3$); these being the bipartite graphs $K_{3,3}$, $D_{3,3}$ and $C_{3,3}$ shown in Figure \ref{Fig1}. In each case we fix a labelling of the black vertices as $b_{1}, b_{2}, b_{3}$, white vertices as $w_{1}, w_{2}, w_{3}$ and the nine edges with numbers in $\{1,\ldots, 9\}$ without repeating. For each ${\mathcal G} \in \{K_{3,3}, D_{3,3}, C_{3,3}\}$, the cardinality of ${\mathcal F}_{\mathcal G}$ is $64$. We proceed, in each case, to describe all non-isomorphic dessins d'enfants whose bipartite graph is isomorphic to ${\mathcal G}$. As the black vertices have odd degree, none of the dessins admitting these bipartite graphs is dualizable.

\begin{figure}[htbp]
\begin{center}
\includegraphics[width=7cm]{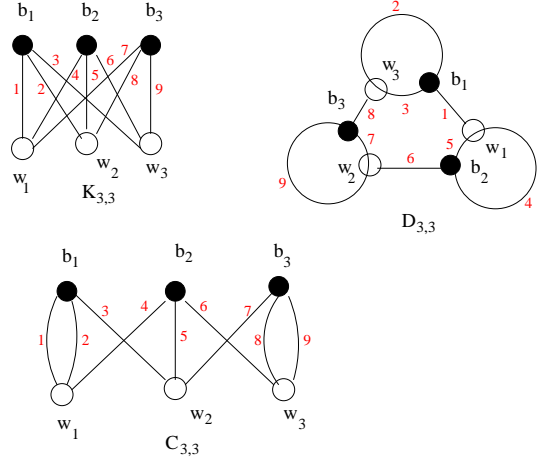}
\caption{{\bf Bipartite graphs with passport $(3^3;3^3)$}}
\label{Fig1}
\end{center}
\end{figure}

\subsubsection{The complete bipartite graph $K_{3,3}$}\label{ejemplo21}
In this case, $G_{K_{3,3}}\cong D_{3} \times D_{3}$ and $\theta(G_{K_{3,3}})=\langle \eta_{1}, \eta_{2},\eta_{3},\eta_{4} \rangle$,
where $$\eta_{1}=(1,4)(2,5)(3,6),\; \eta_{2}=(1,4,7)(2,5,8)(3,6,9),\; \eta_{3}=(1,2)(4,5)(7,8), \; \eta_{4}=(1,2,3)(4,5,6)(7,8,9).$$

The set ${\mathcal F}_{K_{3,3}}/\theta(G_{K_{3,3}})$ has four elements, these are represented by the pairs
$(\sigma_{1},\tau_{1})$, $(\sigma_{1},\tau_{2})$, $(\sigma_{2},\tau_{1})$ and $(\sigma_{2},\tau_{2})$,
where
$$\sigma_{1}=(1,2,3)(4,5,6)(7,8,9), \; \sigma_{2}=(1,2,3)(4,5,6)(7,9,8),$$
$$\tau_{1}=(1,4,7)(2,5,8)(3,6,9), \; \tau_{2}=(1,4,7)(2,5,8)(3,9,6),$$
that is, there are exactly four non-isomorphic dessins d'enfant with $K_{3,3}$ as bipartite graph. The $G_{\mathcal G}$-orbit of $(\sigma_{1},\tau_{1})$ has length $4$, the orbits of $(\sigma_{1},\tau_{2})$ and also of $(\sigma_{2},\tau_{1})$ have lengths $12$ and that of $(\sigma_{2},\tau_{2})$ has length $36$.
These four non-isomorphic dessins have the following properties:
\begin{enumerate} 
\item The dessin d'enfant with monodromy group $E_{1}=\langle \sigma_{1},\tau_{1}\rangle \cong {\mathbb Z}_{3}^{2}$ has genus $g=1$ (since $\tau_{1}\sigma_{1}=(1,5,9)(2,6,7)(3,4,8)$), its passport is
$(3^3;3^3;3^3)$, its 
$\theta(G_{K_{3,3}})$-stabilizer is given by the group $\langle (1,3,2)(4,6,5)(7,9,8), (1,7,4)(2,8,5)(3,9,6) \rangle \cong {\mathbb Z}_{3}^{2}$ (its group of automorphisms)
and the underlying Riemann surface of genus one is defined by the Fermat curve of degree three: $x^{3}+y^{3}+z^{3}=0$. In this case, the group of automorphisms ${\mathbb Z}_{3}^{2}$ is generated by the order three automorphisms
$a_{1}[x:y:z]:=[\omega_{3}x:y:z]$ and $a_{2}[x:y:z]:=[x:\omega_{3}y:z]$. This dessin is reflexive.

\item The dessin d'enfant with monodromy group $E_{2}=\langle \sigma_{1},\tau_{2}\rangle \cong {\mathbb Z}_{3}^{3} \rtimes {\mathbb Z}_{3}$ has genus  $g=2$ (since $\tau_{2}\sigma_{1}=(1,5,9,4,8,3,7,2,6)$), its passport is $(3^3;3^3;9)$, its 
$\theta(G_{K_{3,3}})$-stabilizer is $\langle (1,7,4)(2,8,5)(3,9,6) \rangle \cong {\mathbb Z}_{3}$ (its group of automorphisms) and the underlying Riemann surface of genus two is defined by $y^{3}=(x-1)(x^2+x+1)^{2}$.

\item The dessin d'enfant with monodromy group $E_{3}=\langle \sigma_{2},\tau_{1}\rangle \cong {\mathbb Z}_{3}^{3} \rtimes {\mathbb Z}_{3}$ has genus $g=2$ (since $\tau_{1}\sigma_{2}=(1,5,7,2,6,8,3,4,9)$), its passport is $(3^3;3^3;9)$, its 
$\theta(G_{K_{3,3}})$-stabilizer is $\langle (1,3,2)(4,6,5)(7,9,8) \rangle \cong   {\mathbb Z}_{3}$ (its group of automorphisms) and the same underlying Riemann surface of genus two as above ($E_{2}$ and $E_{3}$ are chirals). 

\item The dessin d'enfant with monodromy group $E_{4}=\langle \sigma_{2},\tau_{2}\rangle \cong {\mathcal A}_{9}$ has genus $g=1$ (since $\tau_{2}\sigma_{2}=(1,5,7,2,6)(3,8)(4,9)$), its passport is
$(3^3;3^3;2^2,5)$, it  is non-uniform, and it has trivial $\theta(G_{K_{3,3}})$-stabilizer (its group of automorphisms). This dessin is reflexive. As the dessin d'enfant has trivial group of automorphisms, it can be defined over its field of moduli. As the dessin d'enfant is unique, its field of moduli is ${\mathbb Q}$. In particular, the genus one Riemann surface it defines can be defined over ${\mathbb Q}$. 
\end{enumerate}

\s
\begin{rema}[On Wilson's operations]
In this example there are four Wilson's operations: $H(1,1)=I$ (identity), $H(1,2), H(2,1)$ and $H(2,2)=H(1,2)\circ H(2,1)$ (each one of order two); they form a copy of ${\mathbb Z}_{2}^{2}$ inside ${\mathfrak S}_{9}$. This group keeps invariant ${\mathcal F}_{K_{3,3}}$ and, moreover, keeps invariant each of the $4$ $G_{\mathcal G}$-orbits. The action is transitive on the $G_{\mathcal G}$-orbit of $(\sigma_{1},\tau_{1})$, produces $3$ orbits in each of the $G_{\mathcal G}$-orbits of $(\sigma_{1},\tau_{2})$ and of $(\sigma_{2},\tau_{1})$ and produces $9$ orbits in the $G_{\mathcal G}$-orbit of $(\sigma_{2},\tau_{2})$. In particular, there are isomorphic dessins d'enfants which are not equivalent under Wilson's operations.
\end{rema}

\subsubsection{The bipartite graph $D_{3,3}$}
In this case, $G_{D_{3,3}}\cong {\mathbb Z}_{2} \times {\mathcal A}_{4}$ and $\theta(G_{D_{3,3}})=\langle \eta_{1}, \eta_{2} \rangle$,
where $$\eta_{1}=(1,8,6)(2,9,4)(3,7,5),\; \eta_{2}=(2,3).$$

The set ${\mathcal F}_{D_{3,3}}/\theta(G_{D_{3,3}})$ has four elements, these are represented by the pairs
$(\sigma,\tau_{1})$, $(\sigma,\tau_{2})$, $(\sigma,\tau_{3})$ and $(\sigma,\tau_{4})$, 
where
$$\sigma=(1,2,3)(4,5,6)(7,8,9),$$
$$\tau_{1}=(1,4,5)(2,3,8)(6,7,9), \; \tau_{2}=(1,4,5)(2,3,8)(6,9,7),$$
$$\tau_{3}=(1,4,5)(2,8,3)(6,7,9), \; \tau_{4}=(1,5,4)(2,8,3)(6,9,7)$$
that is, there are exactly four non-isomorphic dessins d'enfant with $D_{3,3}$ as bipartite graph. The $G_{\mathcal G}$-orbit of $(\sigma,\tau_{1})$ has length $24$, the orbit of $(\sigma,\tau_{2})$ has length $8$, the orbit of $(\sigma,\tau_{3})$ have lengths $24$ and that of $(\sigma,\tau_{4})$ has length $8$.
These four non-isomorphic dessins have the following properties:
\begin{enumerate} 
\item The dessin d'enfant with monodromy group $F_{1}=\langle \sigma,\tau_{1}\rangle \cong {\mathcal A}_{9}$ has genus $g=1$ (since $\tau_{1}\sigma=(1,5,2)(3,9,4,6,8)(7)$), its passport is $(3^3;3^3;1,3,5)$ and its 
$\theta(G_{D_{3,3}})$-stabilizer is trivial (its group of automorphisms).

\item The dessin d'enfant with monodromy group $F_{2}=\langle \sigma,\tau_{2}\rangle \cong {\mathbb Z}_{3}^{2} \rtimes {\mathbb Z}_{3}$ has genus  $g=1$ (since $\tau_{2}\sigma=(1,5,2)(3,9,8)(4,6,7)$), its passport is
$(3^3;3^3;3^3)$ and its 
$\theta(G_{D_{3,3}})$-stabilizer $\langle (1,8,6)(2,9,4)(3,7,5) \rangle \cong {\mathbb Z}_{3}$ (its group of automorphisms).

\item The dessin d'enfant with monodromy group $F_{3}=\langle \sigma,\tau_{3}\rangle \cong {\mathcal A}_{9}$ has genus $g=1$ (since $\tau_{3}\sigma=(1,5,2,9,4,6,8)(3)(7)$), its passport is $(3^3;3^3;1^2,7)$ and its 
$\theta(G_{D_{3,3}})$-stabilizer is trivial (its group of automorphisms).

\item The dessin d'enfant with monodromy group $F_{4}=\langle \sigma,\tau_{4}\rangle \cong {\mathbb Z}_{3}^{2} \rtimes {\mathbb Z}_{3}$ has genus $g=0$ (since $\tau_{4}\sigma=(1,6,8)(2,7,4)(3)(5)(9)$), its passport is $(3^3;3^3;1^3,3^2)$ and its
 $\theta(G_{D_{3,3}})$-stabilizer $\langle (1,8,6)(2,7,5)(3,9,4))\rangle \cong  {\mathbb Z}_{3}$ (its group of automorphisms). 
 \end{enumerate}

All these dessins are reflexive.

\subsubsection{The bipartite graph $C_{3,3}$}
In this case, $G_{C_{3,3}}\cong D_{8}$ and $\theta(G_{C_{3,3}})=\langle \eta_{1}, \eta_{2} \rangle$,
where $$\eta_{1}=(4,6)(3,7)(1,8)(2,9),\; \eta_{2}=(1,2).$$

The set ${\mathcal F}_{C_{3,3}}/\theta(G_{C_{3,3}})$ has eight elements, these are represented by the pairs
$(\sigma,\tau_{1})$, $(\sigma,\tau_{2})$, $(\sigma,\tau_{3})$, $(\sigma,\tau_{4})$, $(\sigma,\tau_{5})$, $(\sigma,\tau_{6})$, $(\sigma,\tau_{7})$ and $(\sigma,\tau_{8})$,
where
$$\sigma=(1,2,3)(4,5,6)(7,8,9),$$
$$\tau_{1}=(1,2,4)(3,5,7)(6,8,9), \; \tau_{2}=(1,2,4)(3,5,7)(6,9,8),$$
$$\tau_{3}=(1,2,4)(3,7,5)(6,8,9), \; \tau_{4}=(1,2,4)(3,7,5)(6,9,8)$$
$$\tau_{5}=(1,4,2)(3,5,7)(6,8,9), \; \tau_{6}=(1,4,2)(3,5,7)(6,9,8),$$
$$\tau_{7}=(1,4,2)(3,7,5)(6,9,8), \; \tau_{8}=(1,4,2)(3,7,5)(6,8,9)$$
that is, there are exactly eight non-isomorphic dessins d'enfant with $C_{3,3}$ as bipartite graph. The $G_{\mathcal G}$-orbit of each $(\sigma,\tau_{j})$ has length $8$.
These eight non-isomorphic dessins have the following properties:
\begin{enumerate} 
\item The dessin d'enfant with monodromy group $G_{1}=\langle \sigma,\tau_{1}\rangle \cong (({\mathbb Z}_{3} \times ({\mathbb Z}_{3}^{2} \rtimes {\mathbb Z}_{2})) \rtimes {\mathbb Z}_{2}) \rtimes {\mathbb Z}_{3}$ has genus $g=2$ (since $\tau_{1}\sigma=(1,3,6,9,4,2,5,8,7)$), its passport is $(3^3;3^3;9)$ and its  
$\theta(G_{C_{3,3}})$-stabilizer is trivial (its group of automorphisms). This dessin is reflexive.

\item The dessin d'enfant with monodromy group $G_{2}=\langle \sigma,\tau_{2}\rangle \cong ({\mathbb Z}_{3}^{2} \rtimes Q_{8}) \rtimes {\mathbb Z}_{3}$ has genus  $g=1$ (since $\tau_{2}\sigma=(1,3,6,7)(2,5,8,4)(9)$), its passport is $(3^3;3^3;1,4^2)$ and its $\theta(G_{C_{3,3}})$-stabilizer is trivial (its group of automorphisms). This dessin is chiral to the one defined by $G_5$.

\item The dessin d'enfant with monodromy group $G_{3}=\langle \sigma,\tau_{3}\rangle \cong {\rm PSL}_{2}(8)$ has genus $g=2$ (since $\tau_{3}\sigma=(1,3,8,7,6,9,4,2,5)$), its passport is 
$(3^3;3^3;9)$ and its 
$\theta(G_{C_{3,3}})$-stabilizer is trivial (its group of automorphisms). This dessin is reflexive.

\item The dessin d'enfant with monodromy group $G_{4}=\langle \sigma,\tau_{4}\rangle \cong ({\mathbb Z}_{3}^{2} \rtimes Q_{8}) \rtimes {\mathbb Z}_{3}$ has genus $g=1$ (since $\tau_{4}\sigma=(1,3,8,4,2,5)(6,7)(9)$), its passport is $(3^3;3^3;1,2,6)$ and its
 $\theta(G_{C_{3,3}})$-stabilizer is trivial (its group of automorphisms).  This dessin is reflexive.
 
\item The dessin d'enfant with monodromy group $G_{5}=\langle \sigma,\tau_{5}\rangle \cong ({\mathbb Z}_{3}^{2} \rtimes Q_{8}) \rtimes {\mathbb Z}_{3}$ has genus $g=1$ (since $\tau_{5}\sigma=(1,5,8,7)(3,6,9,4)(2)$), its passport is $(3^3;3^3;1,4^2)$ and its 
$\theta(G_{C_{3,3}})$-stabilizer is trivial (its group of automorphisms). This dessin is chiral to the one defined by $G_2$.

\item The dessin d'enfant with monodromy group $G_{6}=\langle \sigma,\tau_{6}\rangle \cong {\rm PSL}_{2}(8)$ has genus  $g=1$ (since $\tau_{6}\sigma=(1,5,8,4,3,6,7)(2)(9)$), its passport is
$(3^3;3^3;1^2,7)$ and its 
$\theta(G_{C_{3,3}})$-stabilizer is trivial (its group of automorphisms). This dessin is reflexive.

\item The dessin d'enfant with monodromy group $G_{7}=\langle \sigma,\tau_{7}\rangle \cong (({\mathbb Z}_{3} \times ({\mathbb Z}_{3}^{2} \rtimes {\mathbb Z}_{2})) \rtimes {\mathbb Z}_{2}) \rtimes {\mathbb Z}_{3}$ has genus $g=0$ (since $\tau_{7}\sigma=(1,5)(3,8,4)(6,7)(2)(9)$), its passport is $(3^3;3^3;1^2,2^2,3)$ and its 
$\theta(G_{C_{3,3}})$-stabilizer is trivial (its group of automorphisms). This dessin is reflexive.

\item The dessin d'enfant with monodromy group $G_{8}=\langle \sigma,\tau_{8}\rangle \cong ({\mathbb Z}_{3}^{2} \rtimes Q_{8}) \rtimes {\mathbb Z}_{3}$ has genus $g=1$ (since $\tau_{8}\sigma=(1,5)(3,8,7,6,9,4)(2)$), its passport is $(3^3;3^3;1,2,6)$ and its
 $\theta(G_{C_{3,3}})$-stabilizer is trivial (its group of automorphisms).  This dessin is reflexive.

\end{enumerate}

\subsubsection{}
Table \ref{tabla1} summarizes all the above. We may see the following facts.

\begin{enumerate}
\item There are exactly two non-isomorphic genus zero dessins d'enfants whose bipartite graphs have passport $(3^3;3^3)$; so each one has field of moduli ${\mathbb Q}$. For instance, the one corresponding to $D_{3,3}$ has Belyi map 
$${\mathfrak B}(z)=\frac{(z^{3}-1)^{3}}{3(\omega_{3}-1)z^{3}(z^{3}-(\omega_{3}+1))}$$
where $\omega_{3}=e^{2 \pi i/3}$ (so it is defined over the extension of degree two ${\mathbb Q}(\omega_{3})$). If $\sigma(\omega_{3})=\omega_{3}^{2}$, then 
$${\mathfrak B}^{\sigma}={\mathfrak B} \circ T, \quad T(z)=1/z.$$

In this way, noticing that $\{I,T\}$ is a Weil's co-cycle with respect to the Galois extension ${\mathbb Q}(\omega_{3})/{\mathbb Q}$, we may see that this dessin d'enfant is in fact definable over ${\mathbb Q}$.

\item There are nine non-isomorphic genus one dessins d'enfants whose bipartite graphs have passport $(3^3;3^3)$. With the exceptions of those with monodromy group $G_{2}$, $G_{4}$, $G_{5}$ and $G_{8}$, each of them has field of moduli ${\mathbb Q}$.
The two dessins $G_{2}$ and $G_{5}$ (respectively, $G_{4}$ and $G_{8}$) form a Galois orbit and they are chirals.

\item There are three genus two dessins d'enfants, up to strong-isomorphisms, whose bipartite graphs have passport $(3^3;3^3)$, all of them with field of moduli ${\mathbb Q}$ and reflexive. 

\item All those dessins in Table \ref{tabla1} with trivial group of automorphisms together the regular one are definable over their field of moduli.

\end{enumerate}

{\tiny 
\begin{table}[htp]
\centering
\caption{Dessins d'enfants with bipartite graph of passport $(3^3;3^3)$ up to isomorphisms: FOM=field of moduli, FOD=field of definition.}

\rotatebox{00}{
\begin{tabular}{|c|c|c|c|c|c|c|c|}
\hline
Graph & Genus & Monodromy & Aut & Passport & Regular & FOM & FOM=FOD\\
\hline
$D_{3,3}$ & $0$ & $F_{4} \cong {\mathbb Z}_{3}^{2} \rtimes {\mathbb Z}_{3}$ & ${\mathbb Z}_{3}$ & $(3^3;3^3;1^3,3^2)$ & N & ${\mathbb Q}$ & Y\\
$C_{3,3}$ & $0$ & $G_{8} \cong ({\mathbb Z}_{3} \times ({\mathbb Z}_{3}^{2} \rtimes {\mathbb Z}_{2})\rtimes{\mathbb Z}_{2})\rtimes{\mathbb Z}_{3}$ & $\{I\}$ & $(3^3;3^3;1^2,2^2,3)$ & N & ${\mathbb Q}$ & Y\\
\hline
$K_{3,3}$ & $1$ & $E_{1} \cong {\mathbb Z}_{3}^{2}$ & ${\mathbb Z}_{3}^{2}$ & $(3^3;3^3;3^3)$ & Y & ${\mathbb Q}$ & Y\\
$K_{3,3}$ & $1$ & $E_{4} \cong {\mathcal A}_{9}$ & $\{I\}$ & $(3^3;3^3;2^2,5)$ & N & ${\mathbb Q}$ & Y\\
$D_{3,3}$ & $1$ & $F_{1} \cong {\mathcal A}_{9}$ & $\{I\}$ & $(3^3;3^3;1,3,5)$ & N & ${\mathbb Q}$ & Y\\
$D_{3,3}$ & $1$ & $F_{3} \cong {\mathcal A}_{9}$ & $\{I\}$ & $(3^3;3^3;1^2,7)$ & N & ${\mathbb Q}$ & Y\\
$D_{3,3}$ & $1$ & $F_{2} \cong {\mathbb Z}_{3}^{2} \rtimes {\mathbb Z}_{3}$ & ${\mathbb Z}_{3}$ & $(3^3;3^3;3^3)$ & N & ${\mathbb Q}$ & ?\\
$C_{3,3}$ & $1$ & $G_{2} \cong ({\mathbb Z}_{3}^{2} \rtimes Q_{8})\rtimes{\mathbb Z}_{3}$ & $\{I\}$ & $(3^3;3^3;1,4^2)$ & N & $[{\rm FOM}:{\mathbb Q}]=2$ & Y\\
$C_{3,3}$ & $1$ & $G_{5} \cong ({\mathbb Z}_{3}^{2} \rtimes Q_{8})\rtimes{\mathbb Z}_{3}$ & $\{I\}$ & $(3^3;3^3;1,4^2)$ & N & $[{\rm FOM}:{\mathbb Q}]=2$& Y\\
$C_{3,3}$ & $1$ & $G_{6} \cong {\rm PSL}_{2}(8)$ & $\{I\}$ & $(3^3;3^3;1^2,7)$ & N  & ${\mathbb Q}$ &Y\\
$C_{3,3}$ & $1$ & $G_{4} \cong ({\mathbb Z}_{3}^{2} \rtimes Q_{8})\rtimes{\mathbb Z}_{3}$ & $\{I\}$ & $(3^3;3^3;1,2,6)$ & N & $[{\rm FOM}:{\mathbb Q}]=2$ & Y\\
$C_{3,3}$ & $1$ & $G_{8} \cong ({\mathbb Z}_{3}^{2} \rtimes Q_{8})\rtimes{\mathbb Z}_{3}$ & $\{I\}$ & $(3^3;3^3;1,2,6)$ & N & $[{\rm FOM}:{\mathbb Q}]=2$ & Y\\
\hline
$K_{3,3}$ & $2$ & $E_{3} \cong {\mathbb Z}_{3}^{3}\rtimes{\mathbb Z}_{3}$ & ${\mathbb Z}_{3}$ & $(3^3;3^3;9)$ & N & ${\mathbb Q}$ & ?\\
$C_{3,3}$ & $2$ & $G_{1} \cong ({\mathbb Z}_{3} \times ({\mathbb Z}_{3}^{2} \rtimes {\mathbb Z}_{2})\rtimes{\mathbb Z}_{2})\rtimes{\mathbb Z}_{3}$ & $\{I\}$ & $(3^3;3^3;9)$ & N & ${\mathbb Q}$ & Y\\
$C_{3,3}$ & $2$ & $G_{3} \cong {\rm PSL}_{2}(8)$ & $\{I\}$ & $(3^3;3^3;9)$ & N & ${\mathbb Q}$ & Y \\
\hline
\end{tabular}
}
\label{tabla1}
\end{table}
}




\begin{thebibliography}{99}
\bibitem{ACD1}
S. Ashok, F. Cachazo and E. DellAquila. 
Strebel differentials with integral lengths and Argyres-Douglas singularities. 
Preprint (2006). arXiv:hep-th/0610080.


\bibitem{ACD2}
S. Ashok, F. Cachazo and E. DellAquila.  
Children?s drawings from Seiberg-Witten curves. 
{\it Commun Number Theory Phys.} {\bf 1} (2007), 237--305. doi: 10.4310/CNTP.2007.v1.n2.a1


\bibitem{He-R}
Y-H. He and J. Read. 
Dessins d'enfants in ${\mathcal N}=2$ generalised quiver theories. 
{\it J. High Energy Phys.} {\bf 85} (2015), 1--29. doi: 10.1007/JHEP08(2015)085


\bibitem{Belyi}
G. V. Belyi.
On Galois extensions of a maximal cyclotomic field.
{\it Mathematics of the USSR-Izvestiya} {\bf 14} No.2 (1980), 247--256.

\bibitem{Frucht}
R. Frucht.
Herstellung von Graphen mit vorgegebener abstrakter Gruppe.
{\it Compositio Mathematica} (in German) {\bf 6} (1939), : 239--250.

\bibitem{GAP}
The GAP Group. GAP -- Groups, Algorithms, and Programming, Version 4.8.3; 2016. (http://www.gap-system.org).

\bibitem{GiGo}
E. Girondo and G. Gonz\'alez-Diez.
{\it Introduction to compact Riemann surfaces and dessins d'enfants}. 
London Mathematical Society Student Texts {\bf 79}. Cambridge University Press, Cambridge, 2012.

\bibitem{GiGo1}
E. Girondo and G. Gonz\'alez-Diez.
A note on the action of the absolute Galois group on dessins.
{\it Bull. London Math. Soc.} {\bf 39} No. 5 (2007), 721--723.

\bibitem{GGH}
E. Girondo, G. Gonz\'aez-Diez and R. A. Hidalgo.
Orientable dessins d'enfant.
In preparation.


\bibitem{GoJa} 
G. Gonz\'alez-Diez and A. Jaikin-Zapirain. 
The absolute Galois group acts faithfully on regular dessins and on Beauville surfaces. 
{\it  Proc. London Math. Soc.} {\bf 111} No. 4 (2015), 775--796.



\bibitem{Gro}
A. Grothendieck.
Esquisse d'un Programme (1984). In Geometric Galois Actions. 
L. Schneps and P. Lochak eds., {\it London Math. Soc. Lect. Notes Ser.} {\bf 242}.
Cambridge University Press, Cambridge, 1997, 5--47.

\bibitem{Ha-He}
A. Hanany, Y-H. He, V.  Jejjala, J. Pasukonis, S. Ramgoolam and D. Rodriguez-Gomez.
The beta ansatz: a tale of two complex structures. 
{\it J. High Energy Phys.} {\bf 1106} (2011), 056. doi: 10.1007/JHEP06(2011)056


\bibitem{HJ}
R. A. Hidalgo and P. Johnson.
Field of Moduli of Generalized Fermat Curves of type $(k,3)$ with an application to non-hyperelliptic dessins d'enfants.
{\it Journal of Symbolic Computation} {\bf 77} (2015) 60--72.

\bibitem{Ravi}
R. Jagadeesan.
A new ${\rm Gal}(\overline{\mathbb Q}/{\mathbb Q})$-invariant of dessins d'enfants.
{\it Proc. of the London Math. Soc.} (2015) doi: 10.1112/plms/pdv039

\bibitem{JRR}
V. Jejjala, S. Ramgoolam and D. Rodriguez-Gomez. 
Toric CFTs, permutation triples and Belyi Pairs. 
{\it J. High Energy Phys.} {\bf 1103} (2011), 65. doi:10.1007/JHEP03(2011)065


\bibitem{JS}
G. A. Jones and D. Singerman.
Belyi functions, hypermaps and Galois groups.
{\it Bull. London Math. Soc.} {\bf 28} (6) (1996), 561--590.

\bibitem{JW}
G. A. Jones and J. Wolfart.
{\it Dessins d'Enfants on Riemann Surfaces}.
Springer Monographs in Mathematics. 2016.

\bibitem{J-S-W}
G. A. Jones, M. Streit and J. Wolfart.
Wilson's map operations on regular dessins and cyclotomic fields of definition.
{\it Proc. of the London Math. Soc.} {\bf 100} (2) (2010), 510--532.

\bibitem{Koizumi}
S. Koizumi.
Fields of moduli for polarized Abelian varieties and for curves.
{\it Nagoya Math. J.} {\bf 48} (1972), 37--55.

\bibitem{LZ}
S. K. Lando and A. K. Zvokin.
{\it Graphs on Surfaces and their Applications}. Encyclopeadia of Mathematical Sciencies {\bf 141}, Springer-Verlag, berlin 2004. With an appendix by Don B. Zagier, Low Dimensional Tolpology II.

\bibitem{Mulase}
M. Mulase and M. Penkava.
Ribbon Graphs, Quadratic Differentials on Riemann Surfaces, and Algebraic Curves Defined over $\overline{\mathbb Q}$.
{\it The Asian Journal of Mathematics} {\bf 2} (4) (1998), 875--920.


\bibitem{Ringel}
G. Ringel.
Der vollst\"andige paare Graph auf nichtorientierbaren Fl\"achen. (German) 
{\it J. Reine Angew. Math.} {\bf 220} (1965), 88--93. 

\bibitem{R-Y}
G. Ringel and J. W. T.  Youngs. 
Solution of the Heawood map-coloring problem. 
{\it Proc. Nat. Acad. Sci. U.S.A.} {\bf 60} (1968), 438--445.


\bibitem{Sch}
L. Schneps.
Dessins d'enfant on the Riemann sphere.
In {\it The Grothendieck theory of dessins d'enfants}. Edited by Leila Schneps. 
{\it London Math. Soc. Lect. Notes Ser.} {\bf 200}. 
Cambridge University Press, Cambridge, 1994, 47--77.

\bibitem{Carsten}
C. Thomassen.
The graph genus problem is NP-complete.
{\it Journal of Algorithms} {\bf 10} (4) (1989), 568--576.


\bibitem{Weil}
A. Weil.
The field of definition of a variety.
{\it  Amer. J. Math.} {\bf 78} (1956), 509--524.



\bibitem{Wilson}
S. E. Wilson.
Operators over Regular Maps.
{\it Pacific J. Math.} {\bf 81} (1979) 559--568.

\bibitem{Wolfart1}
J. Wolfart. 
The Obvious part of Belyi's theorem and Riemann surfaces with many automorphisms. In Geometric Galois actions, Vol. 1, {\it London Math. Soc. Lecture Note Ser.}, 
{\bf 242}, Cambridge Univ. Press, Cambridge, 97--112 (1997).

\bibitem{Wolfart2}
J. Wolfart. 
 $ABC$ for polynomials, dessins d'enfants and uniformization---a survey. Elementare und analytische Zahlentheorie, 313--345, Schr. Wiss. Ges. Johann Wolfgang Goethe Univ. Frankfurt am Main, 20, Franz Steiner Verlag Stuttgart, Stuttgart, 2006.

\bibitem{Youngs}
J. W. T. Youngs.
Minimal imbeddings and the genus of a graph.
{\it Journal of Mathematics and Mechanics} {\bf 12} No. 2 (1963), 303--315.

\bibitem{Zapponi}
L. Zapponi.
Fleurs, arbres et cellules: un invariant galoisien pour une famille d'arbres.
{\it Compositio Mathematica} {\bf 122} (2000), 113--133.


\end{thebibliography}
\end{document}